\documentclass[preprint, 3p]{elsarticle}

\usepackage{graphicx}

\usepackage{amsmath, amssymb, amsfonts}
\makeatletter
\DeclareRobustCommand{\qed}{%
  \ifmmode 
  \else \leavevmode\unskip\penalty9999 \hbox{}\nobreak\hfill
  \fi
  \quad\hbox{\qedsymbol}}
\newcommand{\openbox}{\leavevmode
  \hbox to.77778em{%
  \hfil\vrule
  \vbox to.675em{\hrule width.6em\vfil\hrule}%
  \vrule\hfil}}
\newcommand{\qedsymbol}{\openbox}
\newenvironment{proof}[1][\proofname]{\par
  \normalfont
  \topsep6\p@\@plus6\p@ \trivlist
  \item[\hskip\labelsep\itshape
    #1.]\ignorespaces
}{%
  \qed\endtrivlist
}
\newcommand{\proofname}{Proof}
\makeatother
\renewcommand\qedsymbol{$\blacksquare$}


\usepackage{mathtools}
\usepackage{times}

\graphicspath{'./Figs'}
\usepackage{dsfont}
\usepackage{color}
\usepackage{empheq}
\usepackage{caption}
\usepackage{subcaption}

\usepackage{blindtext} 
\usepackage{diffcoeff}

\DeclareMathOperator*{\argmin}{arg\,min}
\diffdef { p }
{
op-symbol = \partial ,
left-delim = \left .,
right-delim = \right | ,
subscr-nudge = +4 mu
}

\newtheorem{theorem}{Theorem}

\newtheorem{lemma}{Lemma}

\newtheorem{definition}{Definition}
\newtheorem{assumption}{Assumption}
\newtheorem{remark}{Remark}


\newcommand{\B}{\mathbb{B}}
\newcommand{\R}{\mathbb{R}}

\newcommand{\abs}[1]{\left\lvert#1\right\rvert}

\newcommand{\set}[1]{\left\{#1\right\}}
\newcommand{\dom}[1]{\text{dom}\left(#1\right)}

\newcommand*{\QEDB}{\hfill\ensuremath{\square}}%

\newcommand{\Tr}[1]{\ensuremath{\text{Tr}}\left(#1\right)}

\newcommand*\tageq{\refstepcounter{equation}\tag{\theequation}}
\allowdisplaybreaks

\makeatletter

\makeatother

\begin{document}
\begin{frontmatter}

\title{Accelerated Continuous-Time Approximate Dynamic Programming via Data-Assisted Hybrid Control\tnoteref{t1,t2}}
\journal{~}
\author{Daniel E. Ochoa \corref{cor1}}
\ead{daniel.ochoa@colorado.edu}

\author{Jorge I. Poveda}
\ead{jorge.poveda@colorado.edu}

\affiliation{organization={Department of Electrical, Energy and Computer Engineering. University of Colorado Boulder},
            city={Boulder},
            postcode={80305}, 
            state={Colorado},
            country={USA}}
            
\tnotetext[t1]{Research supported in part by NSF grant number CNS-1947613.}
\cortext[cor1]{Corresponding Author.}

\begin{abstract}                
We introduce a new closed-loop architecture for the online solution of approximate optimal control problems in the context of continuous-time systems. Specifically, we introduce the first algorithm that incorporates dynamic momentum in actor-critic structures to control continuous-time dynamic plants with an affine structure in the input. By incorporating dynamic momentum in our algorithm, we are able to accelerate the convergence properties of the closed-loop system, achieving superior transient performance compared to traditional gradient-descent based techniques. In addition, by leveraging the existence of past recorded data with sufficiently rich information properties, we dispense with the persistence of excitation condition traditionally imposed on the regressors of the critic and the actor. Given that our continuous-time momentum-based dynamics also incorporate periodic discrete-time resets that emulate restarting techniques used in the machine learning literature, we leverage tools from hybrid dynamical systems theory to establish asymptotic stability properties for the closed-loop system.  We illustrate our results with a numerical example.
%
\end{abstract}

\begin{keyword}
Approximate dynamic programming, concurrent learning, hybrid systems, Lyapunov theory.
\end{keyword}

\end{frontmatter}
\section{Introduction}
Recent technological advances in computation and sensing have incentivized the development and implementation of data-assisted feedback control techniques previously deemed intractable due to their computational complexity. Among these techniques, reinforcement learning (RL) has emerged as a practically viable tool with remarkable degrees of success in robotics \citep{ibarz2021train}, autonomous driving \citep{kiran2021deep}, water-distribution systems \citep{martinez2020multi}, among other cyber-physical applications, see \citep{vamvoudakis2021handbook}. These types of algorithms, are part of a large landscape of adaptive systems that aim to control a plant while simultaneously optimizing a performance index in a model-free way, with closed-loop stability guarantees.

In this paper, we focus on a particular class of infinite horizon RL problems from the perspective of approximate optimal control and approximate adaptive dynamic programming (AADP). Specifically, we study the optimal control problem for nonlinear continuous-time and control-affine deterministic plants, interconnected with approximate adaptive optimal controllers \citep{kamalapurkar2018reinforcement} in an actor-critic configuration. These types of adaptive controllers aim to find, in real time, the solution to the Hamilton-Jacobi-Bellman (HJB) equation by measuring the output of the nonlinear dynamical system while making use of two approximation structures:
\begin{itemize}
    \item a critic, used to estimate the optimal value function of the optimal control problem, and
    \item an actor, used to estimate the optimal feedback controller.
\end{itemize}
Our goal is to design online adaptive dynamics for the real-time tuning of the aforementioned structures, while simultaneously achieving closed-loop stability and high transient performance. To achieve this, and motivated by the widespread usage of momentum-based gradient dynamics in practical RL settings \citep{mnih2015human}, we study continuous-time actor-critic dynamics inspired by a class of ordinary differential equations (ODEs) that can be seen as continuous-time counterparts of Nesterov's accelerated optimization algorithm \citep{ODE_Nesterov}. Such types of algorithms have gained popularity in optimization and related fields due to the fact that they can minimize smooth convex functions at a rate of order $\mathcal{O}(1/t^2)$ \citep{Wibisono1}. The main source for the acceleration property in these ODEs comes from the addition of momentum to gradient-based dynamics, in conjunction with a vanishing dynamic damping coefficient. However, as recently shown in \citep{zero_order_poveda_Lina} and \citep{poveda2020heavy}, the non-uniform convergence properties that emerge in these types of dynamics complicates their use in feedback systems with plant dynamics in the loop. In this paper, we overcome these challenges by incorporating resets into the proposed momentum-based algorithms, similar to restarting heuristics studied in the machine learning literature, see \citep{Candes_Restarting} and \citep{ODE_Nesterov}. Our resulting actor-critic controller is naturally modeled by a hybrid dynamical system that incorporates continuous-time and discrete-time dynamics, which we analyze using tools from \citep{bookHDS}.

A traditional assumption in the literature of continuous-time actor-critic RL is that the regressors used in the parameterizations satisfy a persistence of excitation condition along the trajectories of the plant. However, in practice, this condition can be difficult to verify a priori. To circumvent this issue, in this paper we consider a data-assisted approach, where a finite amount of past ``sufficiently rich'' recorded data is used to guarantee asymptotic learning in the closed-loop system. As a consequence, the resulting data-assisted hybrid control algorithm concurrently uses real-time and recorded data, similar in spirit to concurrent-learning (CL) techniques \citep{chowdhary2010concurrent}.
By using Lyapunov-based tools for hybrid dynamical systems, we analyze the interconnection of an actor-critic neural-network (NN) controller and the nonlinear plant, establishing that the trajectories of the closed-loop system remain ultimately bounded around the origin of the plant and the optimal actor and critic NN parameters. Since the resulting closed-loop system has suitable regularity properties in terms of continuity of the dynamics, our stability results are in fact robust with respect to arbitrarily small additive disturbances that can be adversarial in nature, or that can arise due to numerical implementations. To the best knowledge of the authors, these are the first theoretical stability guarantees of continuous-time accelerated actor-critic algorithms for neural network-based adaptive dynamic programming controllers in nonlinear deterministic settings.

The rest of this paper is organized as follows: Section \ref{sec:prelim} presents the notation and some concepts on hybrid dynamical systems, Section \ref{sec:problem} presents the problem statement and some preliminaries on optimal control. Section \ref{sec:critic} introduces the hybrid momentum-based dynamics for the update of the critic NN, Section \ref{sec:actor} presents the update dynamics for the actor NN, and Section \ref{sec:stability:closedloop} studies the properties of closed-loop system. In Section \ref{sec:numerical} we study a numerical example illustrating our theoretical results. 
\section{Preliminaries}\label{sec:prelim}
\textbf{Notation:}  We denote the real numbers by $\R$, and we use $\R_{\ge 0}\subset \R$ to denote the non-negative real line. We use $\R^n$ to represent the $n$-dimensional Euclidean space and $\abs{\cdot}$ to denote its usual vector norm. Given $A\in\R^{n\times n}$, we use $\abs{A}$ to denote the induced 2-norm for matrices, and we infer its distinction with the vector norm depending on the context. We use $\Tr{A}$ to denote the trace operator on matrices. Given a compact set $\mathcal{A}\subset\R^n$ and a vector $z\in\R^n$, we use $|z|_{\mathcal{A}}\coloneqq \min_{s\in\mathcal{A}}\abs{z-s}$ to represent the minimum distance of $z$ to $\mathcal{A}$. We also use $r\mathbb{B}$ to denote a closed ball in the Euclidean space, of radius $r>0$, and centered at the origin. We use $I_n\in\R^{n\times n}$ to denote the identity matrix, and $(x,y)$ for the concatenation of the vectors $x$ and $y$, i.e., $(x,y)\coloneqq [x^\top,y^\top]^\top$. A function $\gamma:\R_{\ge 0}\to \R_{\ge 0}$ is said to be of class-$\mathcal{K}$ ($\gamma\in\mathcal{K}$), if it is continuous, zero at zero, and nondecreasing.  A function $\beta:\R_{\geq0}\times\R_{\geq0}\to\R_{\geq0}$ is said to be of class-$\mathcal{K}\mathcal{L}$ ($\beta\in\mathcal{KL}$) if $\beta(\cdot,s)\in\mathcal{K}$ for each $s\in\R_{\geq0}$, it is non-increasing in its second argument, and  $\lim_{s\to\infty}\beta(r,s)=0$ for each $r\in\R_{\geq0}$. The gradient of a real valued function $f:\R^n\to\R$ is defined as a column vector and denoted by $\nabla f$. For a vector valued function $g:\R^n\to\R^m$, we use $\diffp{g(x)}{x}\in\R^{m\times n}$ to denote its Jacobian matrix.\\

\noindent\textbf{Hybrid Dynamical Systems:}  To study our algorithms, we will use tools from hybrid dynamical systems (HDS) theory \citep{bookHDS}. A HDS with state $x\in\mathbb{R}^n$, has dynamics
\begin{equation}\label{HDS}
x\in C,~\dot{x}=F(x),~~~\text{and}~~~x\in D,~~x^+= G(x),
\end{equation}
where $F:\R^n\to\R^n$ is called the \emph{flow map}, $G:\R^n\to\R^n$ is called the \emph{jump map}, and $C\subset\R^n$ and $D\subset\R^n$ are closed sets, called the \emph{flow set} and the \emph{jump set}, respectively. We use $\mathcal{H}=(C,F,D,G)$ to denote the elements of the HDS $\mathcal{H}$. Solutions $x:\text{dom}(x)\to\R^n$ to system \eqref{HDS} are indexed by a continuous-time parameter $t$, which increases continuously during flows, and a discrete-time index $j$, which increases by one during jumps. Thus, the notation $\dot{x}$ in \eqref{HDS} represents the derivative $\frac{dx(t,j)}{dt}$; and $x^+$ in \eqref{HDS} represents the value of $x$ after an instantaneous jump, i.e., $x(t,j+1)$. Therefore, solutions $x:\text{dom}(x)\to\mathbb{R}^n$ to system \eqref{HDS} are defined on \emph{hybrid time domains}. For a precise definition of hybrid time domains and solutions to HDS of the form \eqref{HDS}, we refer the reader to \citep[Ch.2]{bookHDS}. The following definitions will be instrumental to study the stability and convergence properties of systems of the form \eqref{HDS}.
\vspace{0.1cm}
\begin{definition}\label{UAS}
The compact set $\mathcal{A}\subset C\cup D$ is said to be \emph{uniformly asymptotically stable} (UAS) for system \eqref{HDS} if $\exists$ $\beta\in\mathcal{K}\mathcal{L}$ and $r>0$ such that every solution $x$ with $x(0,0)\in r\B\cap (C\cup D)$ satisfies:  
\begin{equation}\label{KLbound}
|x(t,j)|_{\mathcal{A}}\leq \beta(|x(0,0)|_{\mathcal{A}},t+j),~ \forall~(t,j)\in\text{dom}(x). 
\end{equation}
When $\beta(r,s)=c_1re^{-c_2s}$ for some $c_1,c_2>0$, the set $\mathcal{A}$ is said to be \emph{uniformly exponentially stable} (UES). \QEDB
\end{definition}
%
\section{Problem Statement}\label{sec:problem}
%
Consider a control-affine nonlinear dynamical plant
\begin{equation}\label{system:dynamics}
    \dot{x} = f(x) + g(x)u,
\end{equation}
where $x\in\R^n$ is the state of the system, $u\in U\subset \R^m$ is the input, and $f:\R^n\to\R^n$ and $g:\R^n\to\R^{n\times m}$ are locally Lipschitz functions. Our goal is to design a stable algorithm able to find --in real time-- a control law $u^*$ that minimizes the cost functional $V:\R^n\times \mathcal{U}_V\to \R$ given by:
\begin{equation}\label{cost_functional}
V(x_0, u) \coloneqq \int_{0}^\infty r\Big(x\big(\tau\big),u\left(x(\tau)\right)\Big)d\tau,
\end{equation}
where $x\big(t\big)$ represents a solution to \eqref{system:dynamics} from the initial condition $x(0) = x_0$, that results from implementing a feedback law $u$, belonging to a class of admissible control laws $\mathcal{U}_V$ characterized as follows:  
\begin{definition}%
\citep[Definition 1]{beard1997galerkin} Given the dynamical system in \eqref{system:dynamics}, a feedback control $u:\R^n\to \R^m$ is \emph{admissible} with respect to the cost functional $V$ in \eqref{cost_functional} if

\begin{itemize}
    \item $u$ is continuous,
    \item $u$ renders system \eqref{system:dynamics} UAS,
    \item $V(x_0,u)<\infty$ for all $x_0\in \R^n$.\QEDB
\end{itemize}
We denote the set of \emph{admissible feedback laws} as $\mathcal{U}_V$.
\end{definition}
\noindent In \eqref{cost_functional}, we consider cost functions $r:\R^n\times \R^m\to \R$ of the form
    $r(x,u) \coloneqq Q(x) + R(u),$
where the state-cost is given by $Q(x) \coloneqq x^\top \Pi_x x$ with $\Pi_x\succ 0$, and the control-cost is given by $R(u)\coloneqq u^\top \Pi_u u$ with $\Pi_u\succ 0$.
%
%
To find the optimal control law that minimizes \eqref{cost_functional}, we study the \emph{Hamiltonian function} $H:\R^n\times \R^m\times \R^n\to \R$ related to  \eqref{system:dynamics} and \eqref{cost_functional}, given by
\begin{align}
H(x, u, \nabla V) 
                  &\coloneqq \nabla V^\top (f(x) + g(x)u) + Q(x) + R(u).\tageq{\label{hamiltonian}}
\end{align}
Using \eqref{hamiltonian}, a necessary optimality condition for $u^*$ is given by Pontryagin's maximum principle \citep{liberzon2011calculus}:
\begin{align*}
 u^*(x) &= \argmin_{u\in\mathcal{U}_V}    H(x, u, \nabla V^*)
        \implies u^*(x) = -\frac{1}{2}\Pi_u^{-1}g(x)^\top \nabla V^*(x),\tageq{\label{optimal:control:law}}
\end{align*}
where $V^*$ represents the \emph{optimal value function}:
\begin{equation*}
    V^*(x)\coloneqq \inf_{u\in \mathcal{U}_V} V(x,u(\cdot))
\end{equation*}
On the other hand, under the assumption that $V^*$ is continuously differentiable, the optimal value function can be shown to satisfy the Hamilton-Jacobi-Bellman equation \citep[Ch. 1.4]{kamalapurkar2018reinforcement}:
\begin{equation*}
    \diffp{V^*}{t} = -H(x,u^*, \nabla V^*)\quad \forall x\in\R^n.
\end{equation*}
Since the functional in \eqref{cost_functional} does not have an explicit dependence on $t$, it follows that $\diffp{V^*}{t}=0$, and hence $H(x,u^*, \nabla V^*) =0$, meaning that for all $x\in \R^n$, the following holds:
\begin{align*}
    \nabla V^{*^\top} \big(f(x) + g(x)u^*(x)\big) + Q(x) + R\Big(u^*(x)\Big)=0.\tageq{\label{hjb}}
\end{align*}
The time-invariant Hamilton-Jacobi-Bellman equation in \eqref{hjb}, allows for a state-dependent characterization of optimality. Therefore, by using the optimal control law in \eqref{optimal:control:law}, and assuming that the system dynamics \eqref{system:dynamics} are known, the form \eqref{hjb} could be leveraged to find $V^*$. Unfortunately, finding an explicit closed-form expression for $V^*$, and thus for the optimal control law, is, in general, an intractable problem. However, the utility of \eqref{hjb} is not completely lost. As we shall show in the following sections, online and historical ``measurements'' of \eqref{hjb} can be leveraged in real time to estimate the optimal control law $u^*$ while concurrently rendering a neighborhood of the origin of system \eqref{system:dynamics} asymptotically stable.
\section{Data-Assisted Critic Dynamics}\label{sec:critic}
To leverage the form of \eqref{hjb}, we consider the following parameterization of the optimal value function $V^*(x)$:
\begin{equation}\label{critic:approximation}
    V^*(x) = \theta_c^{*^\top}\phi_c(x) + \epsilon_c(x)\quad\forall x\in K,
\end{equation}
where $K\subset \R^n$ is a compact set, $\theta_c^*\in \R^{l_c}$, $\phi_c:\R^n\to \R^{l_c}$ is a vector of continuously differentiable basis functions, and $\epsilon_c:\R^n\to \R$ is the approximation error. The parameterization \eqref{critic:approximation} is always possible on compact sets due to the continuity properties of $V$ and the universal approximation theorem \citep{hornik1990universal}. This parametrization results in an optimal Hamiltonian of the form $H_p^*\coloneqq H(x,u^*,\diffp{\phi_c}{x}^\top \theta_c^* + \nabla\epsilon_c)$ given by:
\begin{align}
    H_p^*(x) &= \theta_c^{*^\top}\psi(x,u^*(x)) +  Q(x) + R\left(u^*(x)\right)  + \nabla\epsilon_c(x)^\top \left(f(x) + g(x)u^*(x)\right)\label{optimal:parametrized:hamiltonian},
\end{align}
where we defined $\psi:\R^n\times\R^m\to \R^{l_c}$ as:
\begin{equation}\label{definition:psi}
    \psi(x,u)\coloneqq \diffp{\phi_c(x)}{x}\left(f(x) + g(x)u\right).
\end{equation}
We note that the explicit dependence of $\psi:\R^n\times \R^m\to\R^{l_c}$ on the control action $u$, defined in \eqref{definition:psi}, is a fundamental departure from the previous approaches studied in the context of concurrent learning (CL) NN actor-critic controllers, such as those considered in \citep{vamvoudakis2015asymptotically} and \citep{kamalapurkar2016model}. In particular, we note that in the context of CL the data used to estimate the optimal value function $V^*$ is generated from measurements of the optimal Hamiltonian which, by definition, incorporates the optimal control law $u^*$. Hence, the need to include $u$ as part of the regressor vectors $\psi$ becomes crucial; this dependence characterizes how far our recorded measurements of a Hamiltonian are from the optimal Hamiltonian $H_p^*$. Indeed, this distance will explicitly emerge in our convergence and stability analysis. Naturally, the dependence of \eqref{definition:psi} on $u$ will impose stronger conditions on the recorded data needed to estimate $V^*$.  
%
%
%

\noindent Assuming we have access to $\phi_c$, we can define a \emph{critic} neural network as:
\begin{equation}\label{critic:nn}
    \hat{V}(x) \coloneqq \theta_c^\top\phi_c(x),~~\forall x\in K,
\end{equation}
which will serve as an approximation of the optimal value function $V^*$ in \eqref{critic:approximation}. This critic NN results in an estimated Hamiltonian:
\begin{equation}\label{estimated:optimal:hamiltonian}
    H\left(x,u,\nabla\hat{V}\right) \coloneqq \theta_c^\top\psi\left(x,u\right) + Q(x) + R(u),
\end{equation}
which we will use to design the update dynamics of the critic parameters $\theta_c$. In particular, our goal is to use previously recorded data from trajectories of the plant to ensure asymptotic stability of the set of optimal critic parameters $\set{\theta_c^*}$, while simultaneously enabling the incorporation of instantaneous measurements from the plant. Towards this end, we will assume enough ``richness'' properties in the recorded data, a notion that is captured by a relaxed (and finite-time) version of \emph{persistence of excitation} (PE); see \citep{chowdhary2010concurrent} and \citep{Astrom:Book}. 

\begin{assumption}\label{assumption:richness}
Let $\{\psi\left(x_k,u^*(x_k)\right)\}_{k=1}^{N}$ be a sequence of recorded data, and define:
\begin{align}
    \Lambda &\coloneqq \sum_{k=1}^N\Psi(x_k,u^*(x_k))\Psi(x_k,u^*(x_k))^\top,\quad
    \Psi(x,u) \coloneqq \frac{\psi(x,u)}{1+\psi(x,u)^\top\psi(x,u)}.\tageq{\label{lambda:def}}
\end{align}
There exists $\underline{\lambda} \in \R_{>0}$  such that $\Lambda \succeq \underline{\lambda} I_{n}$, i.e., the data is $\underline{\lambda}$-sufficiently-rich ($\underline{\lambda}$-SR). \QEDB
\end{assumption}
\begin{remark}
In this paper, we study reinforcement learning dynamics that do not make explicit usage of exploration signals with standard PE properties, which can be difficult to guarantee in practice. Instead, we assume access to samples obtained by observing the action of optimal values $u^*(x_k)$ acting on the plant. Note however that this does not imply knowledge of the optimal control policy as a whole, but only of a finite number of \emph{demonstrations} from an ``expert'' policy. Similar requirements commonly arise in the literature of imitation learning, or inverse reinforcement learning, and have been recently shown in practice to reduce the exploratory requirements of online reinforcement learning algorithms, with mild assumptions in the sampling of the demonstrations. For recent discussions on these topics in the discrete-time stochastic reinforcement learning setting we refer the reader to \citep{ciosek2022imitation} and \citep{rashidinejad2021bridging}.
\end{remark}
\noindent Now, we consider the instantaneous and data-dependent errors of the estimated Hamiltonian with respect to the optimal one:
\begin{align*}
    e^i\left(\theta_c,x,u\right) &\coloneqq H\left(x, u, \nabla \hat{V}\right) - H\left(x, u^*(x), \nabla V^*\right)\\
    %
    %
    &~= \theta_c^\top\psi\left(x,u\right) + Q(x) + R\left(u\right),
    \\
    e^d_{k}(\theta_c) &\coloneqq H\left(x_k, u^*(x_k), \nabla \hat{V}\right) - H\left(x_k, u^*(x_k), \nabla V^*\right)\\
    &~= \theta_c^\top\psi\left(x_k,u^*(x_k)\right) + Q(x_k) + R\left(u^*(x_k)\right),
\end{align*}
where we used the fact that $H\left(x, u^*(x), \nabla V^*\right) = 0$. Moreover, we define the \emph{joint instantaneous and data-dependent} error as:
\begin{align*}
    e\left(\theta_c,x,u\right) &\coloneqq \frac{1}{2}\Bigg(\rho_i\frac{e^i\left(x,\theta_c,u\right)^2}{\left(1+\abs{\psi(x,u)}^2\right)^2}+\rho_d\sum_{k=1}^N\frac{e^d_k(\theta_c)^2}{\left(1+\abs{\psi\left(x_k,u^*(x_k)\right)}^2\right)^2}\Bigg)\tageq{\label{joint_error}},
\end{align*}
where $\rho_i\in\R_{\ge 0}$ and $\rho_d\in \R_{>0}$ are tunable gains. Since we are interested in designing real-time training dynamics for the estimation of the optimal parameters $\theta_c^*$, we compute the the gradient of \eqref{joint_error} with respect to $\theta_c$ as follows:
\begin{align}
	    \nabla_{\theta_c} e(\theta_c,x,u) &= \rho_i \left(\rule{0cm}{0.75cm}\right.\Psi(x,u)\Psi(x,u)^\top\theta_c+\frac{\psi(x,u)\left[Q(x) + R(u)\right]}{\left(1+\psi(x,u)^\top\psi(x,u)\right)^2} \left.\rule{0cm}{0.75cm}\right)\notag\\
                  &\qquad+
                  \rho_d\left(\Lambda\theta_c + \sum_{k=1}^N\frac{\psi(x_k,u^*(x_k))\left[Q(x_k) + R\left(u^*(x_k)\right)\right]}{\left(1+\psi\left(x_k,u^*(x_k)\right)^\top\psi\left(x_k,u^*(x_k)\right)\right)^2}\right),\label{gradient:error}
\end{align}
where $\Lambda$ and $\Psi$ are defined in Assumption \ref{assumption:richness}.\\ 
The ``propagated'' error to the HJB equation that results from the approximate parametrization of $V^*$ in \eqref{critic:approximation},  is given by:
\begin{align*}
    \epsilon_{\text{HJB}}(x) &\coloneqq H(x,u^*(x),\nabla V^*) - H\left(x,u^*,\diffp{\phi_c(x)}{x}^\top \theta_c^*\right)\\
    &=-\nabla\epsilon_c^\top(x) \Big(f(x) + g(x)u^*(x)\Big).\tageq{\label{errorHJB}}
\end{align*}
The following assumption is standard, and it is satisfied when the involved functions are continuous and $K$ is compact.
\begin{assumption}\label{assumption:bounds:critic}
There exist $\overline{\phi_c},~\overline{d\phi_c},~\overline{\epsilon_{c}},~\overline{d\epsilon_{c}},~\overline{\epsilon_{\text{HJB}}},\overline{g} \in \R_{>0}$ such that
\begin{align*}
 &\abs{\phi_c(x)}\leq \overline{\phi_c},~~\abs{\diffp{\phi_c(x)}{x}}\leq \overline{d\phi_c},~~\abs{\epsilon_c(x)}\leq\overline{\epsilon_{c}},\\
 &\abs{\nabla\epsilon_{c}(x)}\leq \overline{d\epsilon_{c}},~\abs{\epsilon_{
 HJB}(x)}\leq \overline{\epsilon_{HJB}},~\abs{g(x)}\leq \overline{g}\quad\forall x\in K,
\end{align*} 
where $K$ is the same set considered in \eqref{critic:approximation}.\QEDB
\end{assumption}
\subsection{Critic Dynamics via Data-Driven Hybrid Momentum-Based Control}
To design fast asymptotically stable dynamics for the estimate $\theta_c$, we propose a new class of momentum-based critic dynamics inspired by accelerated gradient flows with restarting mechanisms, such as those studied in \citep{ODE_Nesterov} and \citep{Candes_Restarting}. Specifically, we consider the following hybrid dynamics of the form \eqref{HDS}, with state $y\coloneqq(\theta_c, p, \tau)$ and elements: 
%
\begin{subequations}\label{critic:dynamics}
\begin{align}
    &C_c\coloneqq \set{y\in \R^{2l_c + 1}~:~\tau \in [T_0,T]},\qquad F_c(y,x,u) \coloneqq \begin{pmatrix}
    \frac{2}{\tau}\left(p - \theta_c\right)\\
    -2k_c\nabla_{\theta_c} e(\theta_c,x,u)\\
    \frac{1}{2}
    \end{pmatrix},\label{critic:flow}\\
    & D_c\coloneqq \set{y\in \R^{2l_c + 1}~:~\tau =T },\qquad\qquad G_c(y) \coloneqq \begin{pmatrix}
    \theta_c\\
    \theta_c\\
    T_0
    \end{pmatrix},\label{jumpmap1}
\end{align}
\end{subequations}
where $k_c\in\R_{>0}$ is a tunable gain, and $(p,\tau)$ are auxiliary states that are periodically reset every time $\tau=T$ via the jump map \eqref{jumpmap1}, with $\infty>T>T_0>0$. The dynamical system in \eqref{critic:dynamics} flows in continuous time according to \eqref{critic:flow} whenever the timer variable $\tau$ is in $[T_0,T]$. As soon as $\tau$ hits $T$, the algorithm \eqref{critic:dynamics} resets the timer variable to $T_0$, as well as the momentum variable $p$ to $\theta_c$, while leaving $\theta_c$ unaffected. Accordingly, after the first reset, the system exhibits periodic resets every $\Delta T = 2(T-T_0)$ intervals of time. The following assumption provides data-dependent tuning guidelines for the resetting frequency of the timer variable $\tau$, which will be leveraged in our stability results.
\begin{assumption}\label{tunable:parameters:critic}
The tunable parameters $(T_0,T,k_c,\rho_i,\rho_d)$ satisfy $2\rho_d\underline{\lambda}>\rho_i$ and
\begin{equation}\label{parameters:ineq}
T_0^2 + \frac{1}{2k_c\underline{\lambda}\rho_d}<T^2<\frac{8\rho_d\underline{\lambda}}{k_c \rho_i^2},    
\end{equation}
where $\underline{\lambda}$ is the level of richness of the recorderd data defined in Assumption \ref{assumption:richness}.\QEDB
\end{assumption}
For system \eqref{critic:dynamics}, we study stability properties with respect to the compact set:
\begin{subequations}\label{Ac:definition}
\begin{align}
        \mathcal{A}_c &\coloneqq \mathcal{A}_{\theta_c,p}\times [T_0, T],\\
            \mathcal{A}_{\theta_c,p} &\coloneqq  \set{(\theta_c,p)\in\mathbb{R}^{2l_c}:p_c=\theta_c,~\theta_c=\theta_c^*}.
\end{align}
\end{subequations}
The following theorem is the first main result of this paper. All the proofs are presented in the Appendices.
\begin{theorem}\label{thm:iss:critic}
Given a number $l_c$ of basis functions $\phi_c$ parametrizing the critic NN, and a compact set $K\subset \R^n$, suppose that Assumptions \ref{assumption:richness}, \ref{assumption:bounds:critic} and \ref{tunable:parameters:critic} are satisfied. Then, there exists $(\kappa,c)\in\R_{>0}\times \R_{>0}$ and class-$\mathcal{K}_\infty$ functions $\gamma_1$ and $\gamma_2$, such that for every solution $y=(\theta_c,p, \tau)$ to \eqref{critic:dynamics} with initial condition $y(0,0)=(\theta_c(0,0),p(0,0),\tau(0,0))$, and using the control policy $u(\cdot)\in \mathcal{U}_V$ on the plant, the critic parameters $\theta_c$ satisfy 
\begin{align}
    \abs{\theta_c(t,j)-\theta_c^*} &\leq \kappa e^{-c (t+j) }\abs{y(0,0)}_{\mathcal{A}_{c}}+ \gamma_2\left(\abs{\tilde{u}(x(t,j))}\right) + \gamma_1(\overline{\epsilon_{\text{HJB}}}), \tageq{\label{thm:iss:critic:eq}}
\end{align}
where $\tilde{u}(x(t,j)): = u(x(t,j))- u^*(x(t,j))$, for all $(t,j)\in\dom{y}$ \QEDB
\end{theorem}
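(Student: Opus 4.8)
The plan is to place \eqref{critic:dynamics} in error coordinates and treat it as a perturbed hybrid accelerated-gradient flow with periodic restarts. First I would set $\tilde{\theta}_c \coloneqq \theta_c-\theta_c^*$ and $\eta \coloneqq p-\theta_c^*$ and rewrite the gradient \eqref{gradient:error} in the affine form $\nabla_{\theta_c}e(\theta_c,x,u)=M(x,u)\tilde{\theta}_c+d(x,u)$, with $M(x,u)\coloneqq \rho_i\Psi(x,u)\Psi(x,u)^\top+\rho_d\Lambda$. Using $H(x,u^*,\nabla V^*)=0$ and \eqref{errorHJB}, I would verify the identities $e^d_k(\theta_c^*)=\epsilon_{\text{HJB}}(x_k)$ and $e^i(\theta_c^*,x,u)=\epsilon_{\text{HJB}}(x)+(\theta_c^*)^\top\frac{\partial\phi_c(x)}{\partial x}g(x)\tilde{u}+\big(R(u)-R(u^*)\big)$, which show that the drift $d(x,u)=\nabla_{\theta_c}e(\theta_c^*,x,u)$ is assembled only from the residual $\epsilon_{\text{HJB}}$ and the control mismatch $\tilde{u}$; with Assumption \ref{assumption:bounds:critic} this gives $|d(x,u)|\le c_1\overline{\epsilon_{\text{HJB}}}+\gamma(|\tilde{u}|)$ for some $c_1>0$ and $\gamma\in\mathcal{K}_\infty$, the class-$\mathcal{K}$ part coming from $(\theta_c^*)^\top\frac{\partial\phi_c}{\partial x}g\,\tilde{u}$ and the quadratic gap $R(u)-R(u^*)$. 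The structural fact that replaces persistence of excitation is that, by Assumption \ref{assumption:richness}, the \emph{constant} data matrix obeys $\rho_d\Lambda\succeq \rho_d\underline{\lambda}\,I_{l_c}$, hence $M(x,u)\succeq \rho_d\underline{\lambda}\,I_{l_c}$ uniformly in $(x,u)$; since $|\Psi|\le \tfrac12$ and $\Lambda$ is a finite sum, $M$ is also uniformly bounded above. Thus $\theta_c\mapsto e(\theta_c,x,u)$ is uniformly strongly convex and its minimizer $\theta_c^{\circ}(x,u)=\theta_c^*-M^{-1}d$ satisfies $|\theta_c^{\circ}-\theta_c^*|\le (\rho_d\underline{\lambda})^{-1}|d|$.

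The second step is the restart contraction, organized around the hybrid Lyapunov function $V(y,x,u)\coloneqq 2k_c\tau^2\big(e(\theta_c,x,u)-e(\theta_c^{\circ},x,u)\big)+\tfrac12|p-\theta_c^*|^2$. Along the flow \eqref{critic:flow}, the $\tfrac{2}{\tau}$ vanishing damping produces the accelerated, $\tau^2$-weighted decay of the suboptimality gap; certifying this with a time-weighted energy of Su--Boyd--Cand\`es type (augmented by a cross term to neutralize the indefinite products created by the momentum coupling) yields, at the end of a flow interval $\tau=T$, the bound $e(\theta_c)-e(\theta_c^{\circ})\le V_{\mathrm{prev}}/(2k_cT^2)$, where $V_{\mathrm{prev}}$ is the value of $V$ after the preceding reset. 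At the jump \eqref{jumpmap1} the timer drops from $T$ to $T_0$ and $p^+=\theta_c$, so the momentum term collapses to $\tfrac12|\tilde{\theta}_c|^2$; combining this with the strong-convexity inequality $|\tilde{\theta}_c|^2\le \tfrac{2}{\rho_d\underline{\lambda}}\big(e(\theta_c)-e(\theta_c^{\circ})\big)$ gives the jump-to-jump recursion
\[
V^{+}\;\le\;\frac{T_0^2+(2k_c\rho_d\underline{\lambda})^{-1}}{T^2}\,V^{-}\;+\;b\big(\overline{\epsilon_{\text{HJB}}},\,|\tilde{u}|\big).
\]
The first inequality in \eqref{parameters:ineq}, $T^2>T_0^2+(2k_c\underline{\lambda}\rho_d)^{-1}$, is exactly what forces the contraction factor $\lambda_0\coloneqq\big(T_0^2+(2k_c\rho_d\underline{\lambda})^{-1}\big)/T^2$ into $(0,1)$. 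The remaining conditions of Assumption \ref{tunable:parameters:critic} control the two effects a bare accelerated argument ignores: $2\rho_d\underline{\lambda}>\rho_i$ keeps the time-varying block $\rho_i\Psi\Psi^\top$, of size at most $\rho_i/4$, dominated by the strong-convexity floor $\rho_d\underline{\lambda}$, while the upper bound $T^2<8\rho_d\underline{\lambda}/(k_c\rho_i^2)$ keeps the energy drift accumulated over one flow interval (of length $2(T-T_0)$) below the contraction margin $1-\lambda_0$.

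I would then iterate the recursion to obtain geometric decay of $V$ in the jump index $j$ together with an ultimate bound fixed by $b(\cdot)$. Because the resets are periodic with period $\Delta T=2(T-T_0)<\infty$, the elapsed continuous time satisfies $t\le \Delta T\,(j+1)$, so $\lambda_0^{\,j}$ translates into a joint decay $\kappa\,e^{-c(t+j)}$, with $\kappa,c$ determined by $\lambda_0$ and $\Delta T$. Finally, the lower bound $V\ge 2k_cT_0^2\big(e-e^{\circ}\big)\ge k_cT_0^2\rho_d\underline{\lambda}\,|\theta_c-\theta_c^{\circ}|^2$ extracts $|\theta_c-\theta_c^{\circ}|$, and adding the offset $|\theta_c^{\circ}-\theta_c^*|\le (\rho_d\underline{\lambda})^{-1}|d|$ returns the estimate \eqref{thm:iss:critic:eq}, with $\gamma_1(\overline{\epsilon_{\text{HJB}}})=c_1'\overline{\epsilon_{\text{HJB}}}$ and $\gamma_2(|\tilde{u}|)=c_2'|\tilde{u}|+c_3'|\tilde{u}|^2$ both in $\mathcal{K}_\infty$. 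The pointwise evaluation $|\tilde{u}(x(t,j))|$ appears because $\tilde{u}$ enters only the instantaneous, algebraic part of the error and therefore merely shifts the current target $\theta_c^{\circ}$, toward which the restarted dynamics contract at the geometric rate above; the persistent stabilizing action comes entirely from the recorded $u^*$-data in $\Lambda$ and carries no $\tilde{u}$. The data $(C_c,F_c,D_c,G_c)$ also meets the hybrid basic conditions, so these $\mathcal{KL}$/ISS-type estimates are well posed.

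The step I expect to be the main obstacle is the flow-phase certificate for the restarted accelerated dynamics together with the disturbance bookkeeping. Unlike the textbook Nesterov ODE, here the objective $e(\cdot,x,u)$ and its minimizer $\theta_c^{\circ}(x,u)$ drift along trajectories through the instantaneous regressor $\rho_i\Psi(x,u)\Psi(x,u)^\top$, so a naive $\tau^2$-weighted energy is not monotone and must be repaired with a cross term while one quantifies how much drift the restart period can absorb; showing that this drift is dominated precisely under \eqref{parameters:ineq} and $2\rho_d\underline{\lambda}>\rho_i$, and simultaneously propagating $d(x,u)$ through a full flow-plus-jump cycle so as to obtain genuine class-$\mathcal{K}_\infty$ gains rather than a mere boundedness statement, is where the bulk of the work lies. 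The restart is indispensable: the $\tfrac{2}{\tau}$ damping alone yields only a sub-exponential convex rate, and it is its interplay with the uniform strong convexity $\rho_d\underline{\lambda}$ furnished by Assumption \ref{assumption:richness} that upgrades convergence to the exponential contraction in \eqref{thm:iss:critic:eq}.
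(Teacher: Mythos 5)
Your preparatory steps coincide with the paper's: the deviation-variable decomposition $\nabla_{\theta_c}e=M(x,u)\tilde{\theta}_c+d(x,u)$ is exactly the paper's \eqref{gradient:parameter:error}, your bound $\abs{d}\le c_1\overline{\epsilon_{\text{HJB}}}+\gamma(\abs{\tilde{u}})$ matches \eqref{upper:bounds:critic}, your uniform bound $M\succeq \rho_d\underline{\lambda}I_{l_c}$ is Lemma \ref{lemma:theta}, and your jump contraction factor $\lambda_0=\bigl(T_0^2+(2k_c\rho_d\underline{\lambda})^{-1}\bigr)/T^2$ is precisely the paper's $1-\eta$ in \eqref{DeltaVc}. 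The divergence is in the flow phase, and that is where your proposal has a genuine gap. The paper's Lyapunov function \eqref{Vc:def} depends only on $y$: its potential term $k_c\rho_d\tau^2(\theta_c-\theta_c^*)^\top\Lambda(\theta_c-\theta_c^*)/2$ is built from the \emph{constant} data matrix $\Lambda$ and centered at the \emph{constant} point $\theta_c^*$, so no drift terms ever arise; the $(x,u)$-dependence enters only through $\nabla_{\theta_c}e$ in $\dot{p}$, which the decomposition splits into a uniformly positive-definite part and disturbances, yielding the strict ISS-type decrease \eqref{dotVc} during flows (via $M(\tau)\succ0$, using $2\rho_d\underline{\lambda}>\rho_i$ and the upper bound on $T^2$ in \eqref{parameters:ineq}), with the lower bound on $T^2$ used only at jumps.

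Your energy instead involves $e(\cdot,x,u)$ and its moving minimizer $\theta_c^\circ(x,u)$, both of which drift as the plant evolves during flows. You flag this drift as the main obstacle but do not resolve it, and under Assumptions \ref{assumption:richness}--\ref{tunable:parameters:critic} it cannot be resolved in the form you claim. Differentiating $e(\theta_c,x(t),u(x(t)))-e(\theta_c^\circ,x(t),u(x(t)))$ along trajectories produces terms proportional to $\abs{\theta_c-\theta_c^\circ}$ multiplied by quantities such as the time variation of $\psi(x,u)$ (hence $\abs{\dot{x}}$ and Jacobians of $u$ and $u^*$) and $\nabla\epsilon_{\text{HJB}}$; none of these are bounded by the stated assumptions (Assumption \ref{assumption:bounds:critic} bounds $\abs{\epsilon_{\text{HJB}}}$ but not its gradient), and none of them appear in the constants of \eqref{parameters:ineq}, so the assertion that $T^2<8\rho_d\underline{\lambda}/(k_c\rho_i^2)$ dominates the drift is unsubstantiated. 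Worse, even granting compact-set bounds on these quantities, the drift enters your jump-to-jump recursion as a term of order $\sqrt{V}$ with an $O(1)$ coefficient; iterating $V^+\le \lambda_0 V^- + C\sqrt{V^-}+b$ gives an ultimate bound of order $C^2/(1-\lambda_0)^2$ that does \emph{not} vanish as $(\overline{\epsilon_{\text{HJB}}},\abs{\tilde{u}})\to0$, which is incompatible with the bound \eqref{thm:iss:critic:eq} you are trying to establish. The repair is essentially to adopt the paper's design choice: keep the restarted, $\tau^2$-weighted structure, but center the quadratic at $\theta_c^*$ and use only the constant matrix $\Lambda$ as its weight, pushing the instantaneous regressor $\rho_i\Psi\Psi^\top$ and all remaining $(x,u)$-dependence into perturbations handled through \eqref{gradient:parameter:error}, so that contraction is certified during flows as well as at jumps.
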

The presence of a residual optimal-control mismatch term in \eqref{thm:iss:critic:eq} of the form $\gamma_2(\abs{u(x)-u^*(x)})$, represents a crucial difference with respect to previous CL adaptive dynamic approaches, such as those studied in \citep{vamvoudakis2015asymptotically} and \citep[Ch. 4 ]{kamalapurkar2018reinforcement}. This term is a direct byproduct of our definition of $\psi$ in \eqref{definition:psi}, its dependence on the control action $u$, and its appearance in the error gradient \eqref{gradient:error}. In principle, the emergence of this term in Theorem 1 is agnostic to the particular gradient-based update dynamics for the critic NN, regardless of the inclusion or not of momentum. Since $\gamma_2\in\mathcal{K}$, the larger the difference between the nominal input $u$ and the optimal feedback law $u^*$, the greater the residual error in the convergence of $\theta_c$. In particular, the bound \eqref{thm:iss:critic:eq} describes a semi-global practical input-to-state stability property that, to the best knowledge of the authors, is novel in the context of CL-based RL. In the next section we will show that the residual error $\gamma_2(|\tilde{u}|)$ can be removed by incorporating an additional actor NN in the system.
\begin{remark}
In contrast to standard data-driven gradient-descent dynamics for the estimation of the optimal value function $V^*$, which can achieve exponential rates of convergence proportional to $\underline{\lambda}$ (cf. \citep{kamalapurkar2016model, chowdhary2010concurrent}), under the assumptions of Theorem \ref{thm:iss:critic} the critic update dynamics \eqref{critic:dynamics} can achieve exponential convergence with rates proportional to  $\sqrt{\underline{\lambda}}$. As shown in \citep{zero_order_poveda_Lina}, momentum-based dynamics of this form can achieve these rates using the restarting parameter
\begin{equation}
T=T^{*}:=e\sqrt{\frac{1}{2k_c\rho_d\underline{\lambda}}+ T_0^2}. 
\end{equation}
This property is particularly useful in settings where the level of richness of the data-set is limited, i.e., when $\underline{\lambda}\ll 1$, which is common in practical applications.
\end{remark}
Theorem \ref{thm:iss:critic} guarantees exponential convergence to a neighborhood of the optimal parameters $\set{\theta_c^*}$ that define the optimal value function $V^*$.
Consequently, by continuity, and on compact sets, $\hat{V}$ would converge to an $\epsilon$-approximation of $V^*$, which can be leveraged by the control law \eqref{optimal:control:law} to stabilize system \eqref{system:dynamics}. However, as noted in \citep{doya2000reinforcement}, implementing only critic structures for the control of nonlinear dynamical systems of the form \eqref{system:dynamics} can lead to poor closed-loop transient performance. To tackle this issue, we consider an auxiliary dynamical system, called the \emph{actor}, which will serve as an estimator of the optimal controller that acts on the plant. 
%
\begin{figure}
    \centering
    \includegraphics[width=0.75\linewidth]{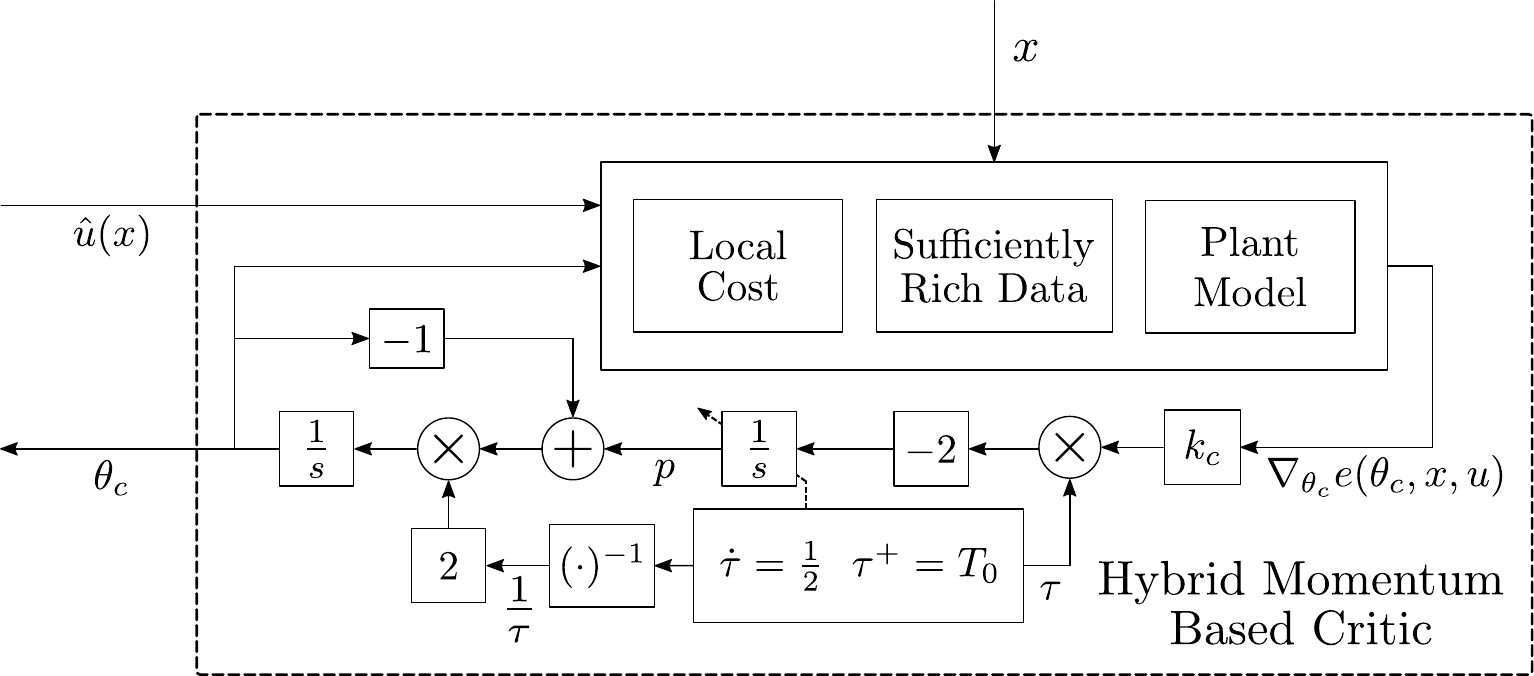}
    \caption{Proposed Hybrid Momentum Based Dynamics for the training of the Critic subsystem}
    \label{fig:critic}
\end{figure}
\section{Actor Dynamics}\label{sec:actor}
\begin{figure}
     \centering
     \includegraphics[width=0.55\linewidth]{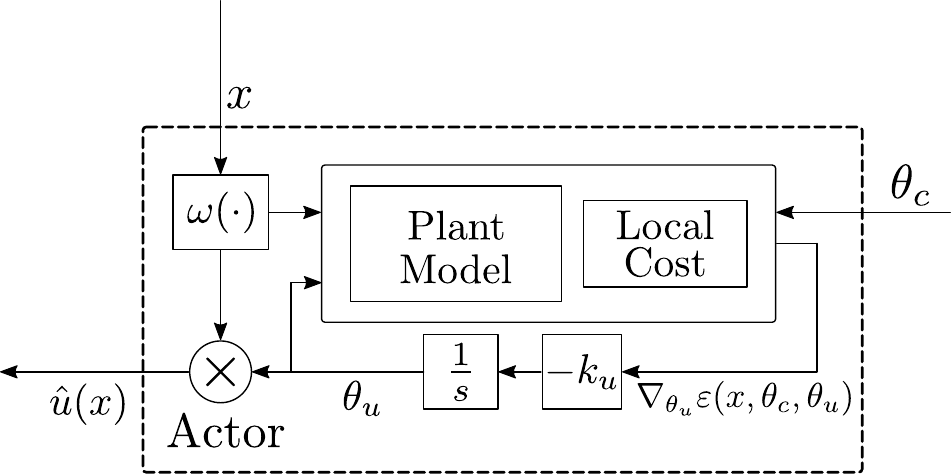}
     \caption{Actor Subsystem}
     \label{fig:actorSubsystem}
\end{figure}
Using the optimal value parametrization described in Section \ref{sec:critic} the optimal control law can written as:
\begin{equation}
    u^*(x) = -\frac{1}{2}\Pi_u^{-1}g(x)^\top \left[\diffp{\phi_c(x)}{x}^\top\theta_c^* + \nabla\epsilon_c(x)\right], \quad \forall x\in K.
\end{equation}
Therefore, using $\diffp{\phi_c(x)}{x}$ and $g(x)$ we can implement an actor neural-network given by:
\begin{equation}\label{feedback:law}
    \hat{u}(x) =  \omega(x)^\top\theta_u,
\end{equation}
where $\omega:\R^n\to \R^{l_c\times m}$ is defined as:
\begin{equation}
    \omega(x) \coloneqq -\frac{1}{2}\diffp{\phi_c(x)}{x}g(x)\Pi_u^{-1}.
\end{equation}
To guarantee convergence of $\hat{u}$ to $u^*$, we design update dynamics for $\theta_u\in\R^{l_c}$ based on the minimization of the error:
\begin{align*}
    \varepsilon(x,\theta_c,\theta_u) &\coloneqq \frac{1}{2}\Bigg[\alpha_1\frac{\varepsilon_a(x,\theta_c,\theta_u)^\top\varepsilon_a(x,\theta_c,\theta_u)}{1+\Tr{\omega(x)^\top\omega(x)}}+\alpha_2\varepsilon_b(\theta_c,\theta_u)^\top\varepsilon_b(\theta_c,\theta_u)\Bigg],\\
    \varepsilon_a(x,\theta_c,\theta_u) &\coloneqq \hat{u}(x)- \omega(x)^\top\theta_c= \omega(x)^\top\left(\theta_u-\theta_c\right),\\
    \varepsilon_b(\theta_c,\theta_u) &\coloneqq \theta_u-\theta_c,\tageq{\label{actor:update:alt}}
\end{align*}
which satisfies:
\begin{align*}
    \nabla_{\theta_u}\varepsilon(x,\theta_c,\theta_u)
                                     %
                                     %
                                     %
                                     &= \Omega(x)(\theta_u-\theta_c),
\end{align*}
where
\begin{align*}
    \Omega(x) &\coloneqq \alpha_1 \frac{\omega(x)\omega(x)^\top}{1+\Tr{\omega(x)^\top\omega(x)}} + \alpha_2 I \in \R^{l_c\times l_c}\quad\forall x\in \R^n.\tageq{\label{actor:Omega:definition}}
\end{align*}
Based on these definitions, we consider the following gradient-descent dynamics for the actor neural-network:
\begin{align*}
    \dot{\theta}_u   = F_u(\theta_u, x,\theta_c) \coloneqq -k_u \nabla_{\theta_u}\varepsilon(x,\theta_c,\theta_u),\tageq{\label{actor:dynamics:alt}}
\end{align*}
where $k_u\in\R_{>0}$ is a tunable gain. A scheme representing these update dynamics is shown in Figure \ref{fig:actorSubsystem}.
%
\section{Momentum-Based Actor-Critic Feedback System}\label{sec:stability:closedloop}
Consider the closed-loop resulting from the interconnection between the plant \eqref{system:dynamics}, the critic update dynamics \eqref{critic:dynamics}, the actor update dynamics \eqref{actor:dynamics:alt} and the feedback law in \eqref{feedback:law} shown in Figure \ref{fig:closedLoop}, and given by:
\begin{subequations}\label{closed:loop:dynamics}
\begin{align}
    \dot{x} &= f(x) + g(x)\hat{u}(x),\qquad x^+ = x,\\
    \dot{y} &= F_c(y, x, \hat{u}(x)), \qquad\quad~ y^+ = G_c(y),\\
    \dot{\theta}_u &= F_u(\theta_u, x, \theta_c),\qquad\qquad \theta_u^+ = \theta_u,
\end{align}
\end{subequations}
and with flow set and jump set given by $C=\R^n\times C_c\times \R^{l_c}$ and $D=\R^n\times D_c\times \R^{l_c}$ respectively, where $C_c$ and $D_c$ are as defined in \eqref{critic:dynamics}. Let $z\coloneqq (x,y,\theta_u)$ be the overall state of the closed-loop system, and define:
\begin{align*}
    \mathcal{A} &\coloneqq \set{0}\times \mathcal{A}_c\times \set{\theta_c^*}.
\end{align*}
The following is the main result of this paper.
\begin{theorem}\label{thm:main:result}
Given the vector of basis functions $\phi_c:\R^n\to\R^{l_c}$ parametrizing the critic NN and a compact set $K_z\coloneqq K\times K_y\times K_\theta\subset \R^n\times \R^{2l_c + 1}\times \R^{l_c}$, where $K$ is given as in \eqref{critic:approximation}, suppose that Assumption \ref{assumption:richness}-\ref{tunable:parameters:critic} are satisfied. Then, there exists $\beta\in\mathcal{KL}$, $\gamma\in\mathcal{K}$ and tunable parameters $(\rho_i, \rho_d, k_c, k_u, \alpha_1, \alpha_2)$, such that for every solution $z=(x, y, \theta_u)$ to the closed-loop system \eqref{closed:loop:dynamics}, with initial condition $z(0,0)=(x(0,0),y(0,0),\theta_u(0,0))\in K_z$, there exists $\tilde{T}>0$ such that for all $(t,j)\in\text{dom}(z)$:
\begin{align*}
    &\abs{z(t,j)}_\mathcal{A} \leq \beta(\abs{z(0,0)}_\mathcal{A}, t+j)+ \gamma(\abs{\left(\overline{\epsilon_{\text{HJB}}},\overline{d\epsilon_c}\right)}) + \nu,
\end{align*}    
for all $0\leq t+j\leq \tilde{T}$, and
\begin{align*}
    \abs{z(t,j)}_\mathcal{A}\leq \gamma(\abs{\left(\overline{\epsilon_{\text{HJB}}},\overline{d\epsilon_c}\right)}) + \nu,~~~\forall~\tilde{T}\leq t+j,
\end{align*}
for some $\nu>0$ constant. \QEDB
\end{theorem}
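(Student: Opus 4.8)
The plan is to exploit the cascade-with-feedback structure of the closed-loop system \eqref{closed:loop:dynamics} by combining the hybrid input-to-state bound for the critic in Theorem \ref{thm:iss:critic} with continuous-time Lyapunov estimates for the actor and the plant. I first observe that, under the jump map $G_c$ in \eqref{jumpmap1}, the critic parameter $\theta_c$, the actor parameter $\theta_u$, and the plant state $x$ are all unaffected by jumps; only the auxiliary states $(p,\tau)$ are reset. Hence the actor and the plant evolve purely in continuous time, and only the critic requires the hybrid machinery of Theorem \ref{thm:iss:critic}.

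First I would analyze the actor error $\tilde{\theta}_u \coloneqq \theta_u-\theta_c$, whose dynamics are $\dot{\tilde{\theta}}_u = -k_u\Omega(x)\tilde{\theta}_u-\dot{\theta}_c$ with $\dot{\theta}_c=\tfrac{2}{\tau}(p-\theta_c)$. Since $\Omega(x)\succeq\alpha_2 I$ by \eqref{actor:Omega:definition} and $\tfrac{2}{\tau}\le\tfrac{2}{T_0}$ on $C_c$, a quadratic Lyapunov function certifies exponential ISS of $\tilde{\theta}_u$ with respect to the bounded driving term $\dot{\theta}_c$; moreover, because Theorem \ref{thm:iss:critic} also forces $p-\theta_c\to 0$, this driving term inherits the critic's $\mathcal{KL}$-plus-residual behavior, and choosing $k_u$ large enforces a time-scale separation that keeps the actor slaved to the critic. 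Using the parametrization \eqref{feedback:law} together with \eqref{optimal:control:law}, the mismatch $\tilde{u}=\hat{u}-u^*$ can be written as $\tilde{u}=\omega(x)^\top(\tilde{\theta}_u+\tilde{\theta}_c)+\tfrac{1}{2}\Pi_u^{-1}g(x)^\top\nabla\epsilon_c(x)$ with $\tilde{\theta}_c\coloneqq\theta_c-\theta_c^*$, so that Assumption \ref{assumption:bounds:critic} gives $|\tilde{u}|\le\bar{\omega}(|\tilde{\theta}_u|+|\tilde{\theta}_c|)+c_0\overline{d\epsilon_c}$ on $K$ for constants $\bar{\omega},c_0>0$.

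The core step is to close the algebraic loop created by the residual $\gamma_2(|\tilde{u}|)$ in \eqref{thm:iss:critic:eq}, which depends on $\tilde{u}$, which in turn depends on $\tilde{\theta}_c$. Substituting the bound on $|\tilde{u}|$ into \eqref{thm:iss:critic:eq} yields an inequality of the form $|\tilde{\theta}_c|\le(\text{decaying})+\gamma_2\!\big(\bar{\omega}|\tilde{\theta}_c|+\bar{\omega}|\tilde{\theta}_u|+c_0\overline{d\epsilon_c}\big)+\gamma_1(\overline{\epsilon_{\text{HJB}}})$. Reading this as a small-gain interconnection between the critic and actor learning errors, I would tune the gains so that the loop gain $\bar{\omega}\,\gamma_2'$ is contractive—taking $\rho_i$ small relative to $\rho_d\underline{\lambda}$, which is already permitted by Assumption \ref{tunable:parameters:critic}—and then solve for a combined $\mathcal{KL}$-plus-residual estimate on $(|\tilde{\theta}_c|,|\tilde{\theta}_u|)$ whose residual is majorized by $\gamma(|(\overline{\epsilon_{\text{HJB}}},\overline{d\epsilon_c})|)$.

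Finally I would close the outer loop through the plant. Admissibility of $u^*$ makes the origin of $\dot{x}=f(x)+g(x)u^*(x)$ UAS on $K$, so a converse Lyapunov function $V_x$ satisfies $\dot{V}_x\le-\alpha(|x|)+\langle\nabla V_x,g(x)\tilde{u}\rangle$ under $\hat{u}=u^*+\tilde{u}$, certifying ISS of the plant with respect to $\tilde{u}$; cascading this with the learning-error estimate drives $x$ into a residual neighborhood of the origin. Assembling the three $\mathcal{KL}$-plus-residual estimates into a single bound on $|z|_\mathcal{A}$ and splitting the time axis at the instant $\tilde{T}$ when the transient $\beta$ falls below the residual gives the claimed two-phase bound. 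The main obstacle is precisely the small-gain closure of the critic–actor loop: I must check that the tunable parameters can simultaneously satisfy Assumption \ref{tunable:parameters:critic} and render $\bar{\omega}\,\gamma_2'$ contractive, while guaranteeing forward invariance of the compact set $K_z$ on which Assumption \ref{assumption:bounds:critic} holds; the constant $\nu$ absorbs the slack from this invariance requirement and from the time-scale-separation error.
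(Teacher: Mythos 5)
Your architecture is genuinely different from the paper's: you build a modular ISS/small-gain argument (actor ISS with respect to $\dot{\theta}_c$, critic ISS from Theorem \ref{thm:iss:critic}, small-gain closure of the critic--actor loop, plant ISS via a converse Lyapunov function), whereas the paper never decomposes the system this way. Instead it constructs a single composite Lyapunov function $\mathcal{V}(z)=V^*(x)+V_c(y)+\tfrac{1}{2}\abs{\theta_u-\theta_c^*}^2$ (see \eqref{fullV:def}), bounds $\dot{\mathcal{V}}$ by a quadratic form in $(\abs{x},\abs{y}_{\mathcal{A}_c},\abs{\theta_u-\theta_c^*})$ plus cross terms, enforces dominance by a parameter choice $\underline{r}\ge c_{yu}^2/(d_1^2 k_u\alpha_2)$, and concludes with the hybrid invariance machinery, using non-increase of $\mathcal{V}$ at jumps. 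However, your route as written has two genuine gaps. First, and most seriously, you invoke Theorem \ref{thm:iss:critic} as a black box inside the closed loop, but its hypothesis requires the plant input to be a fixed admissible feedback $u(\cdot)\in\mathcal{U}_V$; in \eqref{closed:loop:dynamics} the input is $\hat{u}(x)=\omega(x)^\top\theta_u(t,j)$, which is time-varying through $\theta_u$ and is not known to be stabilizing --- indeed, establishing that it eventually stabilizes the plant is part of what Theorem \ref{thm:main:result} must prove. This is circular as stated. The paper avoids the circularity precisely by \emph{not} citing Theorem \ref{thm:iss:critic}: it reuses the pointwise Lyapunov inequality \eqref{critic:vdot:alt2:alt}--\eqref{dotVc:prefinal}, which holds for an arbitrary input signal as long as $x\in K$, so the mismatch $\abs{u-u^*(x)}$ enters as a bounded disturbance term rather than through an integrated ISS estimate. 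To repair your argument you would have to re-derive the critic bound in that generality (and in causal sup-norm form; the bound \eqref{thm:iss:critic:eq} with the input evaluated at the current hybrid time is not the form a small-gain theorem consumes), which is exactly the step your proposal treats as given.

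Second, your choice of actor error $\tilde{\theta}_u=\theta_u-\theta_c$ forces the disturbance $\dot{\theta}_c=\tfrac{2}{\tau}(p-\theta_c)$ into the actor dynamics, and controlling it requires a decaying estimate on $\abs{p-\theta_c}$. Theorem \ref{thm:iss:critic}'s statement bounds only $\abs{\theta_c-\theta_c^*}$, so the information you need is not available from the result you cite (it lives inside its proof, via $\abs{y}_{\mathcal{A}_c}$). The paper sidesteps this entirely by measuring the actor against the \emph{constant} target $\theta_c^*$, i.e.\ $V_a(\theta_u)=\tfrac{1}{2}\abs{\theta_u-\theta_c^*}^2$, so that no derivative of $\theta_c$ ever appears and the coupling shows up only as the cross term $k_u\overline{\Omega}\abs{\theta_u-\theta_c^*}\abs{\theta_c-\theta_c^*}$ in \eqref{dotv3:final:alt}; you could adopt the same error variable and delete this difficulty. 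Finally, you correctly flag the forward-invariance issue (all constants $\bar{\omega},\overline{g},\overline{d\phi_c}$ are valid only while $x\in K$), but it cannot simply be ``absorbed into $\nu$'': without a bootstrap/sublevel-set argument keeping the trajectory in $K_z$, none of your gain estimates are valid along the solution, so this must be carried out, not assumed. Your small-gain tuning claim itself (taking $\rho_i$ small relative to $\rho_d\underline{\lambda}$ to shrink the critic gain, which scales like $c_\chi\propto\rho_i$ against the margin $\underline{r}=\rho_d\underline{\lambda}-\rho_i/2$) is sound on compact sets and compatible with Assumption \ref{tunable:parameters:critic}, so with the two repairs above your decomposition could be made to work.
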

 Theorem \ref{thm:main:result} establishes asymptotic convergence to a neighborhood of the compact set $\mathcal{A}$ as $\left(\overline{\epsilon_{\text{HJB}}},\overline{d\epsilon_c}\right)\to 0$ from any compact set $K_z$ modulo some error $\nu$, under a suitable choice of tunable parameters. To the best knowledge of the authors this is the first result providing stability certificates for continuous-time actor-critic reinforcement learning using recorded data and accelerated value-function estimation dynamics with momentum. In addition, since the resulting closed-loop system in \eqref{closed:loop:dynamics} is given by a well-posed hybrid system, the stability results are robust with respect to arbitrarily small additive disturbances on the states and dynamics \citep[Ch. 7]{bookHDS}.
\section{Numerical Example}\label{sec:numerical}
\begin{figure*}[t]
     \centering
    \begin{subfigure}[c]{0.33\linewidth}
        \centering
        \includegraphics[width=0.75\textwidth]{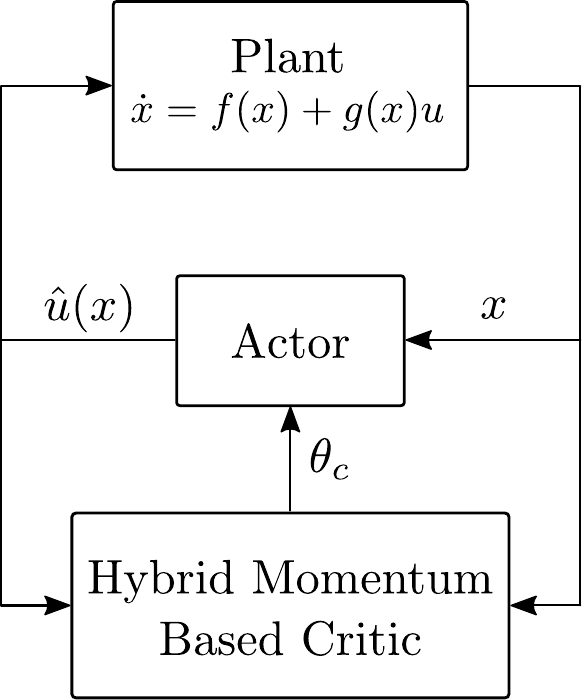}
        \caption{Closed-Loop System}    \label{fig:closedLoop}
    \end{subfigure}\hfill
    \begin{subfigure}[c]{0.66\linewidth}
        \centering
        \includegraphics[width=\textwidth]{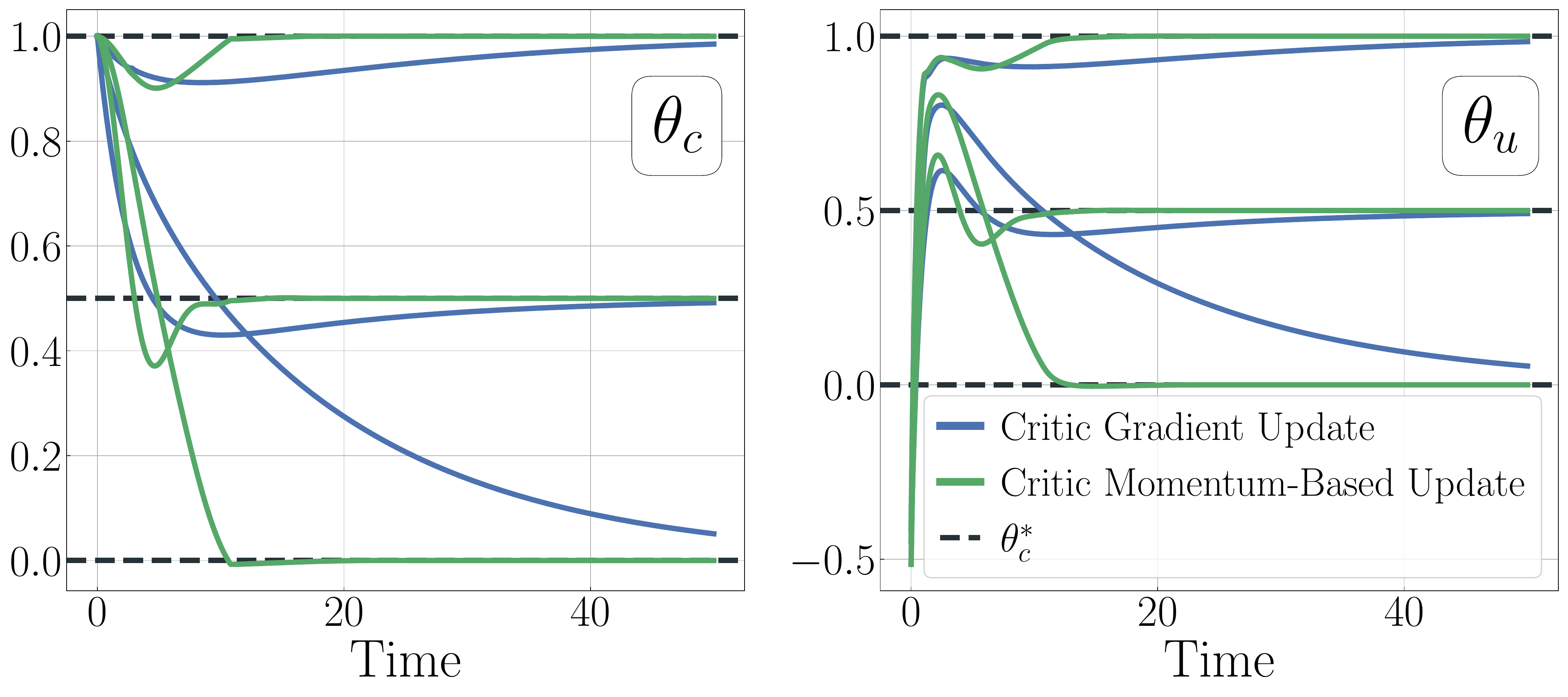}
        \caption{Convergence of the critic (left) and actor (right) neural networks' weights to the optimal values.}    \label{fig:numericalResults}
    \end{subfigure}
    \caption{Closed-Loop System Diagram and Numerical Example}
    \label{fig:numericalExamples}
\end{figure*}
In this section, we present a numerical experiment that illustrates our theoretical results. In particular, we study the following nonlinear control-affine plant:
\begin{subequations}\label{numerical:example:diffeq}
\begin{align}
    &\dot{x} = f(x) + g(x)u,\\
    &f(x) =\begin{bmatrix}-x_1 + x_2\\-\frac{1}{2}\Bigg(x_1 - x_2\Big(1-\cos(2x_1+2)^2\Big)\Bigg)\end{bmatrix},\\
    &g(x)\coloneqq 
     \begin{bmatrix}0\\ \cos(2x_1) + 2\end{bmatrix},
\end{align}
\end{subequations}
with local state and control costs given by $Q(x) = x^\top x$ and $R(u)=u^2$ \citep{kamalapurkar2016model}. The optimal value function for this setting is given by $V^*(x)=\frac{1}{2}x_1^2 +x_2^2$ with optimal control law given by $u^*(x) = -(\cos(2x_1)+ 2)x_2$. Using this information, we choose $\phi_c(x) = (x_1^2, x_1x_2, x_2^2)$, and we implement the prescribed hybrid momentum-based dynamics in \eqref{critic:dynamics} for the update of the critic neural network, and the update dynamics for the actor described in \eqref{actor:dynamics:alt}. We obtain the results shown in Figure \ref{fig:numericalResults} with $x(0,0)=(-10,10)$, $\theta_c(0,0)=(1,1,1)$ and $\theta_u\in [0,1]^3$. We compare the results with the case in which the critic neural-network is updated with the gradient-descent dynamics of \citep{vamvoudakis2015asymptotically}, and where the sufficiently rich data is a set of $16$ data points obtained by sampling the dynamics \eqref{numerical:example:diffeq} in a grid around the origin of size $4\times 4$. In our simulations we use $T_0 = 0.1, T = 5.5$ for the momentum-based dynamics in \eqref{critic:dynamics}. These particular values are obtained by using the level of richness $\underline{\lambda}$ of the data-set, and the inequalities in \eqref{parameters:ineq} in order to ensure compliance with Assumption \ref{tunable:parameters:critic}. For both reinforcement learning dynamics we use $k_c=1, k_u=1, \rho_d=1$ and $\rho_i=1$. As shown in the figure both update dynamics are able to converge to $\set{\theta_c^*}$, with $\theta_c^*=(1/2, 0 ,1)$  describing the optimal value function $V^*$. However, the hybrid-based dynamics are able to significantly improve the transient performance of the learning mechanism.\footnote{The code used to implement this simulation can be found in the following repository: https://github.com/deot95/Accelerated-Continuous-Time-Approximate-Dynamic-Programming-through-Data-Assisted-Hybrid-Control}
\section{Conclusions}
In this paper, we introduced the first stability guarantees for deterministic continuous-time actor-critic reinforcement learning with accelerated training of neural network structures. To do so, we studied a novel hybrid momentum-based estimation dynamical system for the critic NN, which estimates, in real time, the optimal value function. Our stability analysis leveraged the existence of rich recorded data taken from a finite number of samples along optimal trajectories and inputs of the system. We showed that this finite sequence of samples can be used to train the controller to achieve online optimal performance with fast transient performance. Closed-loop stability was established using tools from hybrid dynamical systems theory. Potential extensions include the study of similar accelerated training dynamics for the actor subsystem, as well as considering reinforcement learning problems in hybrid plants.

\bibliographystyle{ieeetr}
\bibliography{references}

\begin{thebibliography}{10}

\bibitem{ibarz2021train}
J.~Ibarz, J.~Tan, C.~Finn, M.~Kalakrishnan, P.~Pastor, and S.~Levine, ``How to
  train your robot with deep reinforcement learning: lessons we have learned,''
  {\em The International Journal of Robotics Research}, vol.~40, no.~4-5,
  pp.~698--721, 2021.

\bibitem{kiran2021deep}
B.~R. Kiran, I.~Sobh, V.~Talpaert, P.~Mannion, A.~A. Al~Sallab, S.~Yogamani,
  and P.~P{\'e}rez, ``Deep reinforcement learning for autonomous driving: A
  survey,'' {\em IEEE Transactions on Intelligent Transportation Systems},
  2021.

\bibitem{martinez2020multi}
J.~Martinez-Piazuelo, D.~E. Ochoa, N.~Quijano, and L.~F. Giraldo, ``A
  multi-critic reinforcement learning method: An application to multi-tank
  water systems,'' {\em IEEE Access}, vol.~8, pp.~173227--173238, 2020.

\bibitem{vamvoudakis2021handbook}
K.~G. Vamvoudakis, Y.~Wan, F.~L. Lewis, and D.~Cansever, ``Handbook of
  reinforcement learning and control,'' 2021.

\bibitem{kamalapurkar2018reinforcement}
R.~Kamalapurkar, P.~Walters, J.~Rosenfeld, and W.~E. Dixon, {\em Reinforcement
  learning for optimal feedback control: A Lyapunov-based approach}.
\newblock Springer, 2018.

\bibitem{mnih2015human}
V.~Mnih, K.~Kavukcuoglu, D.~Silver, A.~A. Rusu, J.~Veness, M.~G. Bellemare,
  A.~Graves, M.~Riedmiller, A.~K. Fidjeland, G.~Ostrovski, {\em et~al.},
  ``Human-level control through deep reinforcement learning,'' {\em nature},
  vol.~518, no.~7540, pp.~529--533, 2015.

\bibitem{ODE_Nesterov}
W.~Su, S.~Boyd, and E.~Candes, ``A differential equation for modeling
  {N}esterov's accelerated gradient method: {T}heory and insights,'' {\em J. of
  Machine Learning Research}, vol.~17, no.~153, pp.~1--43, 2016.

\bibitem{Wibisono1}
A.~Wibisono, A.~C. Wilson, and M.~I. Jordan, ``A variational perspective on
  accelerated methods in optimization,'' {\em Proceedings of the National
  Academy of Sciences}, vol.~113, no.~47, pp.~E7351--E7358, 2016.

\bibitem{zero_order_poveda_Lina}
J.~I. Poveda and N.~Li, ``Robust hybrid zero-order optimization algorithms with
  acceleration via averaging in continuous time,'' {\em Automatica}, vol.~123,
  2021.

\bibitem{poveda2020heavy}
J.~I. Poveda and A.~R. Teel, ``The heavy-ball ode with time-varying damping:
  Persistence of excitation and uniform asymptotic stability,'' in {\em 2020
  American Control Conference (ACC)}, pp.~773--778, IEEE, 2020.

\bibitem{Candes_Restarting}
O'Donoghue and E.~J. Cand\`{e}s, ``Adaptive restart for accelerated gradient
  schemes,'' {\em Foundations of Computational Mathematics}, vol.~15, no.~3,
  pp.~715--732, 2013.

\bibitem{bookHDS}
R.~Goebel, R.~G. Sanfelice, and A.~R. Teel, ``Hybrid dynamical systems:
  modeling stability, and robustness,'' 2012.

\bibitem{chowdhary2010concurrent}
G.~Chowdhary and E.~Johnson, ``Concurrent learning for convergence in adaptive
  control without persistency of excitation,'' in {\em 49th IEEE Conference on
  Decision and Control (CDC)}, pp.~3674--3679, IEEE, 2010.

\bibitem{beard1997galerkin}
R.~W. Beard, G.~N. Saridis, and J.~T. Wen, ``Galerkin approximations of the
  generalized hamilton-jacobi-bellman equation,'' {\em Automatica}, vol.~33,
  no.~12, pp.~2159--2177, 1997.

\bibitem{liberzon2011calculus}
D.~Liberzon, {\em Calculus of variations and optimal control theory}.
\newblock Princeton university press, 2011.

\bibitem{hornik1990universal}
K.~Hornik, M.~Stinchcombe, and H.~White, ``Universal approximation of an
  unknown mapping and its derivatives using multilayer feedforward networks,''
  {\em Neural networks}, vol.~3, no.~5, pp.~551--560, 1990.

\bibitem{vamvoudakis2015asymptotically}
K.~G. Vamvoudakis, M.~F. Miranda, and J.~P. Hespanha, ``Asymptotically stable
  adaptive--optimal control algorithm with saturating actuators and relaxed
  persistence of excitation,'' {\em IEEE transactions on neural networks and
  learning systems}, vol.~27, no.~11, pp.~2386--2398, 2015.

\bibitem{kamalapurkar2016model}
R.~Kamalapurkar, P.~Walters, and W.~E. Dixon, ``Model-based reinforcement
  learning for approximate optimal regulation,'' {\em Automatica}, vol.~64,
  pp.~94--104, 2016.

\bibitem{Astrom:Book}
K.~J. Astrom and B.~Wittenmark, {\em Adaptive Control}.
\newblock Addison-Wesley Publishing Company, 1989.

\bibitem{ciosek2022imitation}
K.~Ciosek, ``Imitation learning by reinforcement learning,'' in {\em
  International Conference on Learning Representations}, 2022.

\bibitem{rashidinejad2021bridging}
P.~Rashidinejad, B.~Zhu, C.~Ma, J.~Jiao, and S.~Russell, ``Bridging offline
  reinforcement learning and imitation learning: A tale of pessimism,'' {\em
  Advances in Neural Information Processing Systems}, vol.~34, 2021.

\bibitem{doya2000reinforcement}
K.~Doya, ``Reinforcement learning in continuous time and space,'' {\em Neural
  computation}, vol.~12, no.~1, pp.~219--245, 2000.

\bibitem{cai2009characterizations}
C.~Cai and A.~R. Teel, ``Characterizations of input-to-state stability for
  hybrid systems,'' {\em Systems \& Control Letters}, vol.~58, no.~1,
  pp.~47--53, 2009.

\bibitem{khalil}
H.~K. Khalil, {\em Nonlinear Systems}.
\newblock Upper Saddle River, NJ: Prentice Hall, 2002.

\bibitem{ochoa2021accelerated}
D.~E. Ochoa, J.~I. Poveda, A.~Subbaraman, G.~S. Schmidt, and F.~R. Pour-Safaei,
  ``Accelerated concurrent learning algorithms via data-driven hybrid dynamics
  and nonsmooth odes,'' in {\em Learning for Dynamics and Control},
  pp.~866--878, PMLR, 2021.

\end{thebibliography}

\appendix
\section{Proof Theorem \ref{thm:iss:critic}}
\subsection{Gradient of Critic Error-Function in Deviation Variables}
First, using \eqref{errorHJB} together with $H(x,u^*(x),\nabla V^*)=0$ for all $x$, we obtain:
\begin{equation}\label{appproximate:hjb}
    \psi(x,u^*(x))^\top\theta_c^*+ Q(x) + R\left(u^*(x)\right) = \epsilon_{\text{HJB}}(x) .
\end{equation}
Thus, using \eqref{gradient:error} and \eqref{appproximate:hjb}, we can rewrite the gradient of $e(\theta_c, x,u )$ as follows:
\begin{align*}
    \nabla_{\theta_c} e(\theta_c, x, u)
                  &= \Theta(x,u)\left(\theta_c - \theta_c^*\right)+ \upsilon_\epsilon(x,u) + \chi(x,u), \tageq{\label{gradient:parameter:error}}
\end{align*}
where
\begin{align}
 \Theta(x,u) &\coloneqq \rho_i\Psi(x,u)\Psi(x,u)^\top + \rho_d\Lambda, \label{definition:Theta}
\end{align}
and
\begin{align}
    \upsilon_\epsilon(x,u)&\coloneqq \rho_i\frac{\psi(x,u)\epsilon_{\text{HJB}}(x)}{\left(1+\abs{\psi(x,u)}^2\right)^2}
    +\rho_d\sum_{k=1}^N\frac{\psi(x_k,u^*(x_k))\epsilon_{\text{HJB}}(x_k)}{\left(1+\abs{\psi(x_k,u^*(x_k))}^2\right)^2}~\in\R^{l_c},\label{definition:upsilon:critic}\\
    \chi(x,u)&\coloneqq  \frac{\rho_i\psi(x,u)\left[\diffp{\phi_c(x)}{x}g(x)\left(u-u^*(x)\right)\right]^\top\theta^*_c}{\left(1+\abs{\psi(x,u)}^2\right)^2}+\frac{\rho_i\psi(x,u)\left[R(u)-R(u^*(x))\right]}{\left(1+\abs{\psi(x,u)}^2\right)^2} ~\in\R^{l_c},
\end{align}
which, by using the fact that $\frac{r}{\left(1+r^2\right)^2}\leq \frac{3\sqrt{3}}{16},\forall r\in \R_{\ge 0}$, satisfy:
\begin{subequations}\label{upper:bounds:critic}
\begin{align}
    \abs{\upsilon_\epsilon(x,u)}&\leq \frac{3\sqrt{3}}{16}\overline{\epsilon_{\text{HJB}}}\left(\rho_i + N\rho_d\right),\label{upper:bound:inputerrorCritic}\\
    \abs{\chi(x,u)}&\leq \rho_i\frac{3\sqrt{3}}{16}\Bigg(\overline{g}\left(\overline{d\phi_c}\left[1+\abs{\theta_c^*}\right]+\overline{d\epsilon_c}\right)\abs{u-u^*(x)} + \lambda_{\max}\left(\Pi_u\right)\abs{u-u^*(x)}^2 \Bigg) \label{upper:bound:mismatchError:Alt}.
\end{align}
\end{subequations}
The following Lemma will be instrumental for our results.
\begin{lemma}\label{lemma:theta}
If the data is $\underline{\lambda}$-sufficiently-rich, then there exist  $\overline{\Theta},\underline{\Theta}\in \R_{>0}$ such that
\begin{align*}
\underline{\Theta} I_n \preceq \Theta(x,u) \preceq \overline{\Theta} I_n\qquad \forall x\in \R^n,~\forall u \in \R^m.
\end{align*}
\end{lemma}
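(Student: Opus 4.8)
The plan is to establish the two-sided bound as a uniform eigenvalue estimate for $\Theta(x,u)$, exploiting the fact that the normalization built into $\Psi$ in \eqref{lambda:def} keeps it bounded regardless of how large the raw regressor $\psi$ becomes. Recall that $\Theta(x,u)\coloneqq \rho_i\Psi(x,u)\Psi(x,u)^\top + \rho_d\Lambda$, so the whole argument reduces to bounding the rank-one online term $\rho_i\Psi\Psi^\top$ and the fixed data matrix $\rho_d\Lambda$ separately.

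For the lower bound I would simply discard the rank-one contribution: since $\rho_i\ge 0$, the matrix $\rho_i\Psi(x,u)\Psi(x,u)^\top$ is positive semidefinite, so $\Theta(x,u)\succeq \rho_d\Lambda$. Invoking Assumption \ref{assumption:richness} ($\Lambda\succeq\underline{\lambda}I$ with $\underline{\lambda}>0$) together with $\rho_d>0$ then yields $\Theta(x,u)\succeq \rho_d\underline{\lambda}\,I$ for every $(x,u)$, so one takes $\underline{\Theta}=\rho_d\underline{\lambda}>0$. This lower bound is automatically uniform in $(x,u)$ because $\Lambda$ is a fixed matrix assembled from the recorded data and does not depend on the online arguments.

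The upper bound is the only part requiring a short computation, and the key observation is a uniform norm bound on $\Psi$. First I would show $\abs{\Psi(x,u)}=\abs{\psi(x,u)}/(1+\abs{\psi(x,u)}^2)\le \tfrac12$ for all $(x,u)$, using the scalar inequality $r/(1+r^2)\le \tfrac12$ for all $r\ge 0$ (maximized at $r=1$). Since $\Psi\Psi^\top$ is rank one and positive semidefinite, its induced $2$-norm equals $\abs{\Psi}^2\le \tfrac14$, whence $\rho_i\Psi(x,u)\Psi(x,u)^\top\preceq (\rho_i/4)I$. Applying the same scalar bound to each summand of $\Lambda$ gives $\Lambda\preceq \sum_{k=1}^N\abs{\Psi(x_k,u^*(x_k))}^2 I\preceq (N/4)I$, hence $\rho_d\Lambda\preceq (\rho_d N/4)I$. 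Adding the two estimates yields $\Theta(x,u)\preceq \overline{\Theta}\,I$ with $\overline{\Theta}=(\rho_i+N\rho_d)/4$, again uniformly in $(x,u)$.

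I do not anticipate a genuine obstacle: the statement is essentially a uniform eigenvalue estimate, and the normalization in the definition of $\Psi$ is precisely what removes any dependence on the (possibly unbounded) magnitude of $\psi$. The only point worth care is applying the scalar envelope correctly to the normalized regressor; note that the relevant quantity here is $\abs{\Psi}=r/(1+r^2)$, whose sharp constant is $\tfrac12$, rather than the expression $r/(1+r^2)^2$ with constant $3\sqrt3/16$ that appears in \eqref{upper:bounds:critic}.
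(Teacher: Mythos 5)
Your proof is correct and follows essentially the same route as the paper's: drop the positive semidefinite rank-one term and invoke Assumption \ref{assumption:richness} for the lower bound $\underline{\Theta}=\rho_d\underline{\lambda}$, and bound the normalized regressor term and $\rho_d\Lambda$ separately in operator norm for the upper bound. The only difference is bookkeeping: the paper uses the looser estimate $\abs{\Psi}^2\le 1$ and keeps $\overline{\Theta}=\rho_i+\rho_d\lambda_{\max}(\Lambda)$, whereas you sharpen to $\abs{\Psi}\le\tfrac12$ and further bound $\lambda_{\max}(\Lambda)\le N/4$, giving $\overline{\Theta}=(\rho_i+N\rho_d)/4$; either constant serves the lemma equally well.
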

\begin{proof}
Let $\theta\in \R^{l_c}$ be arbitrary. Since, by assumption, the data is $\underline{\lambda}$-SR it follows that:
\begin{align*}
    \theta^\top \Theta(x,u)\theta &= \theta^\top \rho_i\Psi(x,u)\Psi(x,u)^\top\theta + \theta^\top \rho_d \Lambda \theta\\
    %
    %
    %
    &\ge \rho_d \underline{\lambda} \abs{\theta}^2\\
    &\implies \Theta(x,u) \succeq \underline{\Theta} I_{l_c},~\forall (x,u)\in \R^n\times\R^m,\tageq{\label{lower:bound:Theta}}
\end{align*}
where $\underline{\Theta} \coloneqq \rho_d\underline{\lambda}$. On the other hand, using the fact that $\abs{aa^\top}=\abs{a}^2,~\forall a\in\R^n$, we obtain that:
\begin{equation*}
    \abs{\Psi(x,u)\Psi(x,u)^\top} = \abs{\Psi(x,u)}^2 \leq 1,~\forall (x,u)\in\R^n\times \R^m,
\end{equation*}
we obtain:
\begin{align*}
    \theta^\top \Theta(x,u)\theta &= \theta^\top \rho_i\psi(x,u)\psi(x,u)^\top\theta + \theta^\top \rho_d \Lambda \theta\\
                            %
                            %
                            %
                            %
                            %
                            &\leq \left(\rho_i +  \rho_d\lambda_{\max}\left(\Lambda\right)\right)\abs{\theta}^2\\
                            &\implies \Theta(x,u) \preceq \overline{\Theta}I_{l_c},\quad \forall (x,u)\in \R^n\times \R^m,
\end{align*}
where $\overline{\Theta}\coloneqq \rho_i +  \rho_d\lambda_{\max}\left(\Lambda\right)$.
\end{proof}
\subsection{Lyapunov-Based Analysis}
Recall from Section \ref{sec:critic} that $y=(\theta_c,p,\tau)$, suppose that the assumptions of Theorem \ref{thm:iss:critic} hold and consider the Lyapunov candidate function $V_c:\R^{l_c}\times\R^{l_c}\times \R_{> 0}\to\R_{\ge 0}$ given by:
\begin{align*}
    V_c(y)&\coloneqq \frac{\abs{p-\theta_c}^2}{4} + \frac{\abs{p-\theta_c^*}^2}{4} + k_c\rho_d\tau^2\frac{\left(\theta_c-\theta_c^*\right)\top \Lambda\left(\theta_c-\theta_c^*\right)}{2},\tageq{\label{Vc:def}}
\end{align*}
where $\Lambda$ was defined in Assumption \ref{assumption:richness} and which satisfies:
\begin{align*}
    \underline{c}\abs{y}_{\mathcal{A}_c}^2 \leq V_c&(y) \leq \overline{c}\abs{y}_{\mathcal{A}_c}^2,\tageq{\label{Vc:quadraticBounds}} \\
\underline{c}\coloneqq \min\set{\frac{1}{4},\frac{k_c\rho_d T_0^2\underline{\lambda}}{2}},&\quad  \overline{c}\coloneqq \set{\frac{3}{4},~\frac{1}{2}\left(1{+}k_c\rho_d T^2\overline{\lambda}\right)},
\end{align*}
where $\overline{\lambda}\coloneqq \lambda_{\max}\left(\Lambda\right)$. Now, let $u\in\mathcal{U}_V$, and consider the time derivative of $V_c$ along the continuous-time evolution of the critic subsystem, i.e., $\dot{V}_c = \nabla_y V_c(y)^\top \dot{y}$. Then, by using \eqref{gradient:parameter:error} and Lemma \ref{lemma:theta}, and some algebraic manipulation, $\dot{V}_c$ can be shown to satisfy
\begin{align*}
    \dot{V}_c &\leq -\begin{pmatrix}
        \abs{p-\theta_c}& \abs{\theta_c-\theta_c^*}
    \end{pmatrix} M(\tau)\begin{pmatrix}
        \abs{p-\theta_c}\\ \abs{\theta_c-\theta_c^*}
    \end{pmatrix} + 2\sqrt{2}k_c y_{\mathcal{A}_c} \big(\abs{\upsilon_\epsilon(x)} + \abs{\chi(x,u(x))}\big),\tageq{\label{critic:vdot:alt2:alt}}
\end{align*}
where
\begin{equation}\label{matrix:worstCase}
    M(\tau)\coloneqq \begin{pmatrix}
        \frac{2}{k_c\tau^2}& -\frac{\rho_i}{2}\\ -\frac{\rho_i}{2}& \underline{\Theta}
    \end{pmatrix},
\end{equation}
and $\mathcal{A}_c$ was defined in Section \ref{sec:critic}. Since $2\rho_d\underline{\lambda} > \rho_i$ and $T^2< \frac{8\rho_d\underline{\lambda}}{k_c\rho_i^2}$ by means of Asssumption \ref{assumption:bounds:critic}, and $\tau(t,j)\in[T_0,T],~\forall (t,j)\in \dom{y}$ by construction of the critic update dynamics \eqref{critic:dynamics}, it follows that $M(\tau)\succeq \underline{r}$ with $\underline{r}\coloneqq \underline{\Theta} - \frac{\rho_i}{2}$. Hence, from \eqref{critic:vdot:alt2:alt} and using \eqref{upper:bounds:critic}, we obtain that:
\begin{align*}
    \dot{V}_c &\leq -\underline{r}\abs{y}_{\mathcal{A}_c}^2 + \abs{y}_{\mathcal{A}_c}\Big(\gamma_\nu\left(\overline{\epsilon_{\text{HJB}}}\right) + \gamma_\chi\left(\abs{u(x)-u^*(x)}\right)\Big),\tageq{\label{dotVc:prefinal}}
\end{align*}
where $\gamma_\nu,\gamma_\chi \in \mathcal{K}_\infty$ are given by:
\begin{align*}
    \gamma_\nu(r) &\coloneqq \frac{3\sqrt{6}}{8}\left(\rho_i + N\rho_d\right)r,\quad \gamma_\chi(r)\coloneqq c_\chi(r+r^2),\\
    c_\chi &\coloneqq \frac{3\sqrt{6}}{8}\rho_i\max\set{\overline{g}\left(\overline{d\phi_c}\left[1+\abs{\theta_c^*}\right]+\overline{d\epsilon_c}\right),~ \lambda_{\max}\left(\Pi_u\right)}.
\end{align*}
Thus, letting $d_c\in (0,1)$, and using \eqref{Vc:quadraticBounds},  \eqref{dotVc:prefinal}:
\begin{subequations}\label{dotVc}
\begin{align}
    \dot{V}_c &\leq -\frac{\underline{r}(1-d_c)}{\overline{c}}V_c(y),\quad
    \forall \abs{y}_{\mathcal{A}_c} \ge \frac{1}{d_c}\Big(\gamma_\nu\left(\overline{\epsilon_{\text{HJB}}}\right) + \gamma_\chi\left(\abs{u(x)-u^*(x)}\right)\Big).
\end{align}
\end{subequations}
On the other hand, the change of $V_c$ during the jumps in the update dynamics for the critic \eqref{critic:dynamics}, satisfies:
\begin{equation}\label{DeltaVc}
V_c\left(y^+\right) -  V_c(y) \leq -\eta V_c(y),
\end{equation}
with $\eta\coloneqq 1-\frac{T_0^2}{T^2}-\frac{1}{2k_c\rho_d\underline{\lambda}T^2}$ which satisfies $\eta\in(0,1)$ by means of Assumption \ref{assumption:bounds:critic}. Together, \eqref{dotVc} and \eqref{DeltaVc}, in conjuction with the quadratic bounds of \eqref{Vc:quadraticBounds}, imply the results of Theorem \ref{thm:iss:critic} via \citep[Prop 2.7]{cai2009characterizations} and the fact that $\abs{\theta_c(t,j)-\theta^*_c}\leq \abs{y(t,j)}_{\mathcal{A}_c}\leq \abs{(\theta_c(t,j),p(t,j))}_{\mathcal{A}_{\theta_c,p}}$ for all $(t,j)\in\dom{y}$.\qed
\section{Proof of Theorem \ref{thm:main:result}}
\subsection{Gradient of Actor Error-Function in Deviation Variables}
First, note that we we can write \eqref{actor:update:alt} as:
\begin{align*}
    \nabla_{\theta_u} \varepsilon_a(x,\theta_c,\theta_u) &=  \Omega(x)\left(\theta_u - \theta_c^* -\left(\theta_c-\theta_c^*\right)\right),
\end{align*}
and consider the following Lemma, instrumental for our results.
\begin{lemma}\label{lemma:omega}
There exists $\overline{\Omega},\underline{\Omega}\in \R_{>0}$ such that 
\begin{equation*}
    \underline{\Omega} I_{l_c} \preceq \Omega(x) \preceq \overline{\Omega}I_{l_c}.
\end{equation*}
\end{lemma}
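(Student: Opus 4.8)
The plan is to establish the two matrix inequalities separately, exploiting the additive structure of $\Omega(x)$ in \eqref{actor:Omega:definition} and the fact that, for positive gains $\alpha_1,\alpha_2>0$, the soft normalization makes every bound uniform in $x$. The lower bound is immediate: the normalized term $\omega(x)\omega(x)^\top/(1+\Tr{\omega(x)^\top\omega(x)})$ is positive semidefinite for every $x$, since $\omega(x)\omega(x)^\top\succeq 0$ and the scalar denominator is strictly positive. Hence $\Omega(x)\succeq\alpha_2 I_{l_c}$, and one may take $\underline{\Omega}\coloneqq\alpha_2$, which is uniform over $\R^n$.

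For the upper bound, the key observation is that the normalization renders the first term bounded independently of the magnitude of $\omega(x)$, so that no compactness of the state space is needed. First I would note that $\Tr{\omega(x)^\top\omega(x)}=\Tr{\omega(x)\omega(x)^\top}$ equals the sum of the nonnegative eigenvalues of the positive semidefinite matrix $\omega(x)\omega(x)^\top$; consequently $\lambda_{\max}(\omega(x)\omega(x)^\top)\leq\Tr{\omega(x)\omega(x)^\top}$, and therefore
\begin{align*}
\lambda_{\max}\!\left(\frac{\omega(x)\omega(x)^\top}{1+\Tr{\omega(x)^\top\omega(x)}}\right)=\frac{\lambda_{\max}(\omega(x)\omega(x)^\top)}{1+\Tr{\omega(x)\omega(x)^\top}}\leq\frac{\Tr{\omega(x)\omega(x)^\top}}{1+\Tr{\omega(x)\omega(x)^\top}}<1,
\end{align*}
uniformly over $x\in\R^n$, where I use the elementary scalar inequality $s/(1+s)<1$ for $s\geq 0$. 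This yields $\alpha_1\,\omega(x)\omega(x)^\top/(1+\Tr{\omega(x)^\top\omega(x)})\preceq\alpha_1 I_{l_c}$, and adding the $\alpha_2 I_{l_c}$ term gives $\Omega(x)\preceq\overline{\Omega}I_{l_c}$ with $\overline{\Omega}\coloneqq\alpha_1+\alpha_2$.

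I do not anticipate a genuine obstacle here, as the lemma is essentially a uniform-boundedness statement. The only point requiring care is that the bounds must hold for \emph{all} $x\in\R^n$ rather than merely on the compact set $K$, and this is precisely what the normalization by $1+\Tr{\omega(x)^\top\omega(x)}$ secures: it decouples the bound from the possibly unbounded growth of $\omega(x)$. The resulting constants $\underline{\Omega}=\alpha_2$ and $\overline{\Omega}=\alpha_1+\alpha_2$ are explicit in the tunable gains, which is convenient for the subsequent Lyapunov analysis where these gains are selected to satisfy the conditions of Theorem \ref{thm:main:result}.
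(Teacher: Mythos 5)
Your proof is correct and follows essentially the same route as the paper's: the lower bound from positive semidefiniteness of the normalized term (giving $\underline{\Omega}=\alpha_2$), and the upper bound from the fact that the largest eigenvalue of $\omega(x)\omega(x)^\top$ is dominated by its trace together with the scalar bound $s/(1+s)\leq 1$ (giving $\overline{\Omega}=\alpha_1+\alpha_2$). The paper phrases this via quadratic forms and the inequality $\abs{A}\leq\abs{A}_F$, but that is the identical estimate in different notation, and both arguments yield the same uniform constants over all of $\R^n$.
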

\begin{proof}
Let $\theta\in \R^{l_c}$ be arbitrary. Then, by the definition of $\Omega:\R^n\to \R^{l_c\times l_c}$ in \eqref{actor:Omega:definition}, it follows that:
\begin{align*}
    \theta^\top \Omega(x) \theta 
    = \alpha_1\frac{\abs{\omega(x)^\top \theta}^2}{1+\Tr{\omega(x)^\top\omega(x)}} + \alpha_2\abs{\theta}^2
    \ge \alpha_2\abs{\theta}^2
    &\implies \Omega(x) \succeq \underline{\Omega} I_{l_c},\quad \forall x\in \R^n,
\end{align*}
where $\underline{\Omega}\coloneqq \alpha_2$. On the other hand, we obtain:
\begin{align*}
    \theta^\top \Omega(x)\theta 
    &= \left(\alpha_1\frac{\abs{\omega(x)}^2}{1+\abs{\omega(x)}_F^2} + \alpha_2\right)\abs{\theta}^2
    \leq \overline{\Omega}\abs{\theta}^2
    \implies \Omega(x) \preceq \overline{\Omega}I_{l_c}, \quad \forall x\in \R^n,
\end{align*}
where $\overline{\Omega} \coloneqq \alpha_1 + \alpha_2$, $\abs{A}_F$ represents the Frobenius norm and where we used $\abs{A}\leq \abs{A}_F,~ \forall A\in\R^{l_c\times l_c}$ and $\frac{r^2}{1+r^2}\leq 1~\forall r\in \R$.
\end{proof}
Now, consider the Lyapunov function
\begin{subequations}
\begin{align}
    & \mathcal{V}(z) \coloneqq V_o(x) + V_c(y) + V_a(\theta_u),\label{fullV:def}\\
    & V_o(x) \coloneqq V^*(x),\quad 
    V_a(\theta_u)\coloneqq \frac{1}{2}\abs{\theta_u-\theta_u^*}^2,\label{VoVa:def}
\end{align}
\end{subequations}
where $V_c$ was defined in \eqref{Vc:def} and where we recall that $z=(x,y,\theta_u)$. By \citep[Lemma 4.3]{khalil}, and since $V_o=V^*$ is a continuous and positive definite function in $\R^n$, there exist $\underline{\gamma}_o,\overline{\gamma}_o\in \mathcal{K}$ such that $\underline{\gamma}_o\left(\abs{x}\right)\leq V_o(x) \leq \overline{\gamma}_o(\abs{x})$. Hence, using \eqref{Vc:quadraticBounds}, and the fact that sum of class $\mathcal{K}$ is in turn of class $\mathcal{K}$, there exist $\underline{\gamma}_{\mathcal{V}},\overline{\gamma}_{\mathcal{V}}\in\mathcal{K}$ such that:
\begin{align}
    \underline{\gamma}_{\mathcal{V}}\left(\abs{z}_{\mathcal{A}}\right)\leq \mathcal{V}(z) \leq  \overline{\gamma}_{\mathcal{V}}\left(\abs{z}_{\mathcal{A}}\right)\label{fullV:bounds}
\end{align}
Now, the time derivative of $\dot{V}_o=\nabla V_o(x)^\top \dot{x}$ along the trajectories of \eqref{closed:loop:dynamics} satisfies:
\begin{align*}
 \dot{V}_o  & \leq -Q(x)  + \frac{\overline{g}^2\lambda_{\max}\left(\Pi_u^{-1}\right)}{2}\left(\overline{d\phi}_c
 \abs{\theta^*_c}+ \overline{d\epsilon_c}\right)\left(\overline{d\phi_c}\abs{\theta_u-\theta_c^*} + \overline{d\epsilon_c}\right)\tageq{\label{dotv1:alt}}.
\end{align*}
On the other hand, making use of Lemma \ref{lemma:omega}, for the time derivative of $\dot{V}_a=\nabla_{\theta_u} V_a(\theta_u)^\top\theta_u$ we obtain:
\begin{align*}
    \dot{V}_a &\leq -k_u\alpha_2\abs{\theta_u-\theta_c^*}^2 + k_u \overline{\Omega}\abs{\theta_u-\theta_c^*}\abs{\theta_c-\theta_c^*}.
    \tageq{\label{dotv3:final:alt}}
\end{align*}
Hence, using \eqref{critic:vdot:alt2:alt}, \eqref{dotv1:alt}, and \eqref{dotv3:final:alt}, together with the upper bounds in \eqref{upper:bounds:critic}, we obtain that the time derivative of $\mathcal{V}$ along the trajectories of the closed-loop system satisfies:
\begin{align*}
    \dot{\mathcal{V}}&\leq -Q(x) - \underline{r}\abs{y}_{\mathcal{A}_c}^2 - k_u\alpha_2\abs{\theta_u - \theta_c^*}^2\\
    &\quad + c_y\abs{y}_{\mathcal{A}_c} + c_u\abs{\theta_u - \theta_c^*} + c_{yu}\abs{\theta_u - \theta_c^*}\abs{y}_{\mathcal{A}_c} \\
    &\qquad+ c_{yu^2}\abs{y}_{\mathcal{A}_c}\abs{\theta_u - \theta_c^*}^2
    + c_0,\tageq{\label{lyapunov:closed:loop:partial:local:alt}}
\end{align*}
where
\begin{align*}
c_y &\coloneqq \frac{3\sqrt{6}}{8}k_c\Bigg(\overline{\epsilon_{\text{HJB}}}\left(\rho_i + N\rho_d\right) +\frac{1}{2}\overline{g}^2\rho_i\Bigg[\lambda_{\max}\left(\Pi_u^{-1}\right)\left(\overline{d\phi_c}\left[1+\abs{\theta_c^*}\right]+\overline{d\epsilon_c}\right)\overline{d\epsilon_c}+ \lambda_{\max}\left(\Pi_u\right)\lambda_{\max}\left(\Pi_u^{-1}\right)^{2}\overline{d\epsilon_c}^2\Bigg]\Bigg),\\
c_u &\coloneqq \frac{1}{2}\left(\overline{d\phi}_c \abs{\theta^*_c}+ \overline{d\epsilon_c}\right)\overline{g}^2\lambda_{\max}\left(\Pi_u^{-1}\right)\overline{d\phi_c},\\
c_{yu}&\coloneqq \frac{3\sqrt{6}k_c}{16}\Bigg(2k_u\overline{\Omega}+\overline{g}^2\rho_i\lambda_{\max}\left(\Pi_u^{-1}\right)\left(\overline{d\phi_c}\left[1+\abs{\theta_c^*}\right]+\overline{d\epsilon_c}\right)\overline{d\phi_c} \Bigg),\\
c_{yu^2}&\coloneqq \frac{3\sqrt{6}}{16}k_c \overline{g}^2\rho_i\lambda_{\max}\left(\Pi_u\right)\lambda_{\max}\left(\Pi_u^{-1}\right)^2\overline{d\phi_c}^2,\\
c_0 &\coloneqq \frac{1}{2}\left(\overline{d\phi}_c \abs{\theta^*_c}+ \overline{d\epsilon_c}\right)\overline{g}^2\lambda_{\max}\left(\Pi_u^{-1}\right)\overline{d\epsilon_c}.
\end{align*}
Then, for all $\abs{\theta_u - \theta_c^*}\leq \frac{c_{yu}}{c_{yu^2}}$, by using $Q(x)=x^\top\Pi_x x$ and letting $d_1\in (0,1)$, from \eqref{lyapunov:closed:loop:partial:local:alt}, $\dot{\mathcal{V}}$ can be further upper bounded as:
\begin{align*}
    \dot{\mathcal{V}} &\leq -\lambda_{\min}\left(\Pi_x\right)\abs{x}^2 -(1-d_1)\left(\underline{r}\abs{y}_{\mathcal{A}_c}^2+k_u\alpha_2\abs{\theta_u - \theta_c^*}^2\right)\\
    &\quad +c_y\abs{y}_{\mathcal{A}_c} + c_u \abs{\theta_u - \theta_c^*}+ c_0\\
    &\qquad -\begin{pmatrix}
        \abs{y}_{\mathcal{A}_c} & \abs{\theta_u - \theta_c^*}
    \end{pmatrix}\begin{pmatrix}
        d_1\underline{r} & -c_{yu}\\
        -c_{yu}  & d_1k_u\alpha_2
    \end{pmatrix}\begin{pmatrix}
        \abs{y}_{\mathcal{A}_c} \\ \abs{\theta_u - \theta_c^*}
    \end{pmatrix}.\tageq{\label{dotV:quadraticDominance:alt}}
\end{align*}
Now, pick a set of tunable parameters $(\rho_i,\rho_d,k_c,k_u)$ such that $\underline{r}\ge \frac{c_{yu}^2}{d_1^2k_u\alpha_2}$ so that from  \eqref{dotV:quadraticDominance:alt}, we obtain:
\begin{subequations}\label{dotfullV}
\begin{align}
    &\qquad\quad\dot{\mathcal{V}}\leq -(1-d_2)d_z\abs{z}_{\mathcal{A}}^2,\quad\forall \abs{z}_{\mathcal{A}} \ge \max\set{\frac{c_0}{2d_{yu}},~\frac{2d_{yu}}{d_2d_z}} ,~\abs{\theta_u - \theta_c^*}\leq \frac{c_{yu}}{c_{yu^2}},
\end{align}
\end{subequations}
with
\begin{align*}
d_z &\coloneqq  \min\set{\lambda_{\min}\left(\Pi_x\right),~(1-d_1)\underline{r},~k_u\alpha_2},\\
d_{yu}&\coloneqq \max\set{2c_{yu},~c_0},\quad d_2\in(0,1).
\end{align*}
Notice that for every compact set $K_\theta$ of initial conditions for $\theta_u$ we can pick suitable $\rho_i,\rho_d,\alpha_1,\alpha_2,k_c,k_u$ to satisfy $K_\theta\subset\frac{c_{yu}}{c{yu^2}}\B$ such that \eqref{dotfullV} holds for every trajectory with $\theta_u(0,0)\in K_{\theta}$.
Now, during jumps $x$ and $\theta_u$ do not change, and hence $\mathcal{V}$ satisfies:
\begin{equation}\label{DeltafullV}
    \mathcal{V}(z^+) - \mathcal{V}(z) = V_c(y^+) -  V_c(y) \leq -\eta V_c(y).
\end{equation}
The result of the theorem follows by using the strong-decrease of $\mathcal{V}$ during flows outside a neighborhood of $\mathcal{A}$ described in \eqref{dotfullV}, the non-increase of $\mathcal{V}$ during jumps given in \eqref{DeltafullV}, by noting that, by design, the closed-loop dynamics are a well-posed HDS which experiences periodic jumps followed by intervals of flow of length $T-T_0>0$ (c.f. \citep{ochoa2021accelerated}), and by following the same arguments of \citep[Prop 3.27]{bookHDS} and \citep[Prop. 2.7]{cai2009characterizations}.\qed

\end{document}